\documentclass[hidelinks,onefignum,onetabnum]{siamart251216}
\usepackage{amssymb,amsmath,graphicx,bm,color,booktabs,comment,stmaryrd}
\usepackage{caption,subcaption,float,mathtools,epstopdf}
\numberwithin{figure}{section}
\numberwithin{table}{section}
\newsiamthm{thm}{Theorem}
\newsiamthm{axiom}{Axiom}
\newsiamthm{cor}{Corollary}
\newsiamthm{lem}{Lemma}
\newsiamthm{prop}{Proposition}
\newsiamthm{defn}{Definition}
\newsiamremark{rem}{Remark}

\def\theequation{\thesection.\arabic{equation}}

\newcommand{\ie}{{\it{i.e.}}}

\newcommand{\eg}{{\it{e.g.}}}

\newcommand{\diag}{\mbox{diag}}

\headers{LBM for Hyperbolic Systems}{Y. Zhang, P. Lin, J. Zhao, and W. Zhao}
\title{A Lattice Boltzmann Method with Adaptive Relaxation Parameter and Lax--Friedrichs-Type Equilibrium for Hyperbolic Systems}
\author{Yifan Zhang\thanks{Department of Applied Mathematics, University of Science and Technology Beijing, Beijing 100083, China (\email{yifanSXY69@163.com}).}
\and Ping Lin\thanks{Division of Mathematics, University of Dundee, Dundee DD1 4HN, United Kingdom (\email{p.lin@dundee.ac.uk}).}
\and Jin Zhao\thanks{Academy for Multidisciplinary Studies, Capital Normal University, Beijing 100048, China (\email{zjin@cnu.edu.cn}). Corresponding author.}
\and Weifeng Zhao\thanks{Department of Applied Mathematics, University of Science and Technology Beijing, Beijing 100083, China (\email{wfzhao@ustb.edu.cn}). Corresponding author.}}

\begin{document}
\maketitle

\begin{abstract}
In this paper, a lattice Boltzmann method with an adaptive relaxation parameter and a Lax--Friedrichs-type equilibrium is proposed for hyperbolic systems with source terms. The equilibrium distribution recovers the conservative variables and physical fluxes while incorporating dissipation determined by characteristic-speed bounds. To balance accuracy and robustness, the relaxation parameter is selected from a local smoothness indicator based on characteristic projections. In smooth regions, the parameter approaches the low-dissipation limit, retaining second-order accuracy; near discontinuities, it is automatically reduced to introduce localized dissipation and suppress nonphysical oscillations. The stabilization acts directly through the local collision step and preserves the standard collide-and-stream structure, without a posteriori recomputation or interface-based limiting. Maxwell iteration establishes second-order consistency in smooth regions, and a weighted $L^2$-stability estimate is proved for linear hyperbolic systems with periodic boundary conditions under a standard CFL condition. Numerical experiments for scalar advection, Euler, shallow-water, and reactive Euler equations demonstrate the expected accuracy for smooth solutions and robust resolution of challenging one- and two-dimensional discontinuous problems, including wet--dry fronts and reactive discontinuities. A large-scale simulation of a 2D cellular detonation further demonstrates the capability of the method to resolve long-time multidimensional shock--reaction interactions.

\end{abstract}

\begin{keywords}
lattice Boltzmann method, hyperbolic systems, adaptive relaxation parameter, Lax--Friedrichs-type equilibrium, weighted $L^2$-stability
\end{keywords}

\begin{MSCcodes}
65M06, 65M12, 35L65, 76M20
\end{MSCcodes}

\section{Introduction}

Hyperbolic systems of conservation laws arise in a wide range of scientific and engineering applications, including gas dynamics, shallow-water flows, magnetohydrodynamics, traffic flow, and chemically reactive flows. A major difficulty in the numerical approximation of such systems is the coexistence of smooth wave propagation and nonsmooth structures, such as shocks, contact discontinuities, rarefaction waves, wet--dry fronts, and material interfaces. In smooth regions, a desirable numerical method should retain high accuracy and low numerical dissipation, whereas near discontinuities sufficient numerical dissipation is required to suppress spurious oscillations and maintain robustness. Designing schemes that can balance these competing requirements remains a central issue in the computation of hyperbolic systems \cite{LeVeque1992,Toro2009}.

Over the past decades, many high-resolution methods have been developed for hyperbolic conservation laws. Total variation diminishing schemes and flux-limiter methods provide robust non-oscillatory approximations near discontinuities \cite{Harten1983}. Essentially non-oscillatory and weighted essentially non-oscillatory schemes improve shock resolution through nonlinear stencil selection and nonlinear reconstruction procedures \cite{HartenEngquistOsherChakravarthy1987,JiangShu1996,Shu1998}. Discontinuous Galerkin methods combine high-order polynomial approximations with elementwise conservation and have been widely used for convection-dominated problems \cite{CockburnShu2001}. These methods are highly effective, but for nonlinear systems and multidimensional balance laws their implementation often involves characteristic decomposition, nonlinear reconstruction, limiters, approximate Riemann solvers, or carefully designed numerical fluxes. Such ingredients may increase the algorithmic complexity, especially when source terms, multidimensional wave interactions, or reactive effects are present.

The lattice Boltzmann method (LBM) provides an alternative kinetic framework for approximating macroscopic equations through the evolution of distribution functions on discrete lattices. Its local collision--streaming structure, explicit time evolution, and natural parallelism have made it attractive for incompressible flows, complex fluids, porous media, heat transfer, and other transport phenomena \cite{ChenDoolen1998,Succi2001,Mohamad2011,GuoShu2013}. The macroscopic behavior of LBM has been studied through Chapman--Enskog expansion, asymptotic analysis, Maxwell iteration, and equivalent-equation approaches \cite{LallemandLuo2000,JunkKlarLuo2005,Dubois2008,YongZhaoLuo2016,ZhaoYong2017}. These analyses show that the accuracy, stability, and dissipation properties of lattice Boltzmann schemes depend strongly on the equilibrium distribution and relaxation parameters.

The use of LBM for hyperbolic conservation laws has developed along closely related kinetic and relaxation routes. Relaxation systems and kinetic BGK models provide the analytical basis for replacing a nonlinear conservation law by discrete-velocity transport coupled through relaxation \cite{JinXin1995,Bouchut1999,Bouchut2003,AregbaDriolletNatalini2000}, while shock-oriented and compressible LBMs demonstrate the potential of kinetic discretizations for strongly compressible flows \cite{WatariTsutahara2003,YanChikatamarlaKarlin2009,XuZhangGanChenYu2012}. Graille constructed a vectorial LBM for general one-dimensional hyperbolic systems and related it to an explicit discretization of the Jin--Xin relaxation system; the equivalent equations and scalar stability conditions were also analyzed \cite{Graille2014}. The macroscopic finite-difference representation of general LBMs was subsequently clarified in \cite{BellottiGrailleMassot2022}. For hyperbolic equations with source terms, Anandan and Raghurama Rao studied explicit and semi-implicit vector-kinetic LBMs, deriving their macroscopic finite-difference forms and consistency and examining H-inequality, total-variation, positivity, source balancing, and multidirectional upwinding \cite{AnandanRao2024}.

Rigorous stability and convergence results for LBMs applied to hyperbolic equations remain strongly dependent on the model and the available kinetic structure. Guillon, H\'elie, and Helluy established entropy stability of an over-relaxed vectorial LBM under the existence of suitable convex kinetic entropies and also introduced an augmented equivalent-system analysis of stability \cite{GuillonHelieHelluy2024}. Building on the kinetic-entropy framework, Bellotti, Helluy, and Navoret constructed fourth-order entropy-stable LB schemes for multidimensional nonlinear systems of conservation laws by composing time-symmetric second-order steps and adapting the relaxation parameter locally at each stage \cite{BellottiHelluyNavoret2025}. From a different perspective, Aregba-Driollet and Bellotti derived monotonicity and \(L^\infty\)-stability conditions for two-relaxation-times LB schemes and proved convergence to the weak entropy solution of a scalar nonlinear conservation law \cite{AregbaDriolletBellotti2026}. Their analysis also shows that the additional relaxation freedom can reduce numerical diffusion relative to the corresponding BGK scheme. This convergence result is rigorous but is restricted to a scalar equation; extending comparable estimates to general nonlinear hyperbolic systems remains difficult.

Recent work has also concentrated on robust stabilization near discontinuities. Kozhanova et al. combined a robust first-order LB scheme and a second-order LB scheme through an a posteriori MOOD strategy with smoothness and positivity detectors \cite{KozhanovaZhaoLoubereBoivin2025}. Wissocq, Liu, and Abgrall developed a two-dimensional vectorial LBM for the Euler equations \cite{WissocqLiuAbgrall2025}. They identified a CFL condition under which the first-order scheme is convex preserving and blended first- and second-order updates with convex limiters designed to preserve either positivity of density and internal energy or local maximum principles. Their computations demonstrated robustness for strong discontinuities and near-vacuum states. These developments demonstrate the potential of vectorial LBMs for hyperbolic systems. Nevertheless, robust computations near strong discontinuities generally require additional entropy control, switching, a posteriori detection, or convex/interface limiting. This motivates a complementary mechanism that controls dissipation directly through the local collision relaxation while retaining the standard collide-and-stream update.

In this paper, we propose an efficient LBM with adaptive relaxation parameter for hyperbolic systems with source terms. A Lax--Friedrichs-type equilibrium is constructed so that its moments recover the conservative variables and physical fluxes while characteristic-speed bounds provide a transparent baseline dissipation. The relaxation parameter is then selected locally from smoothness indicators formed by characteristic projections. This construction directly addresses the compromise inherent in a fixed relaxation parameter: in smooth regions the parameter approaches $2$, recovering the low-dissipation second-order regime, whereas near discontinuities it decreases automatically and supplies the additional dissipation needed to control nonphysical oscillations. The proposed stabilization does not require the construction of convex kinetic entropies. Moreover, characteristic projections enter only the smoothness detector; the collide-and-stream update itself requires neither nonlinear reconstruction nor a Riemann-solver-based numerical flux, and no a posteriori recomputation or interface-based convex limiter is introduced.

The analysis and experiments support this construction in several complementary ways. Maxwell iteration is used to show its second-order accuracy in both space and time for smooth solutions. A matrix-weighted $L^2$-stability estimate is established for linear hyperbolic systems with spatially and temporally varying relaxation parameters. The numerical study verifies second-order accuracy for smooth scalar and Euler problems. It also demonstrates localized stabilization for discontinuous advection, shock tubes, shock--entropy-wave interaction, wet--dry fronts, the two-dimensional Euler Riemann problem, and reactive flows. In particular, a large-scale 2D cellular-detonation simulation demonstrates the ability of the method to capture a corrugated leading front, transverse-wave interactions, and a persistent quasi-periodic cellular structure over long times. Comparisons with fixed relaxation, a limiter-based vectorial LBM, and WENO reference solutions illustrate the balance achieved between accuracy in smooth regions and robustness near nonsmooth structures. 

We emphasize that the proposed strategy is modular: once an admissible equilibrium and directional characteristic information are available, the same adaptive-relaxation principle can be incorporated into other vectorial or multi-relaxation lattice Boltzmann formulations. It is therefore potentially applicable to broader hyperbolic and balance-law systems, including magnetohydrodynamics, elastodynamics, multispecies transport, and more general reactive-flow models. The local collide-and-stream structure is also compatible with parallel implementations and local mesh adaptation. For problems involving extreme states, combining the present smoothness-based dissipation control with invariant-domain or positivity-preserving mechanisms is a natural extension.

The remainder of this paper is organized as follows. Section 2 presents the lattice Boltzmann formulation, the Lax--Friedrichs-type equilibrium, and the adaptive relaxation parameter. Section 3 gives the consistency analysis based on Maxwell iteration. Section 4 establishes the weighted $L^2$-stability result for linear hyperbolic systems. Section 5 reports the numerical experiments. Section 6 concludes the paper.

\section{LBM for hyperbolic systems}
\setcounter{equation}{0}

\subsection{LBM}
Consider a general hyperbolic system with a source term:
\begin{equation}\label{21}
\partial_t
U
+ \sum_{j=1}^d \partial_{x_j} F_j(U)
= S(U),
\end{equation}
where $U \in \mathbb R^m$ is the unknown $m$-vector of $(\bm x, t)\equiv(x_1,x_2,...,x_d,t) \in \mathbb R^d \times [0, +\infty)$, $F_j:=F_j(U) \in \mathbb R^{m}$ are the fluxes, and $S(U) \in \mathbb R^{m}$ is the source term.
The LBM for the hyperbolic system \eqref{21} can be written as \cite{Graille2014,GuoZhengShi2002}
\begin{equation}\label{22}
\begin{split}
\bm{f}_k\bigl(\bm{x}_{i} + \bm{e}_k \delta_x,t_{n+1}\bigr)
=
& \bm{f}_k\bigl(\bm{x}_{i}, t_n\bigr)
-\omega
\left[
\bm{f}_k\bigl(\bm{x}_{i},t_n\bigr)-\bm{f}_k^{(eq)}\bigl(\bm{x}_{i},t_n\bigr)
\right] \\
& +  \frac{1}{2} \delta_t \bm{s}_k\bigl(\bm{x}_{i},t_n\bigr)
  + \frac{1}{2} \delta_t \bm{s}_k\bigl(\bm{x}_{i} + \bm{e}_k \delta_x,t_{n+1}\bigr),
\end{split}
\end{equation}
where $\bm f_k(\bm{x}_{i},t_n) \in \mathbb R^{m}$ is the vector-valued distribution function at lattice node $\bm{x}_{i}$ and time $t_n$, $\bm{e}_k$ is the discrete velocity, $\delta_x$ is the lattice size, $t_{n+1} = t_n + \delta_t$, $\delta_t$ is the time step, $\omega $ is the relaxation parameter, $\bm{f}_k^{(eq)}=\bm{f}_k^{(eq)}(U)$ is the equilibrium, and $\bm{s}_k$ is related to the source term. The macroscopic quantity $U$ is computed via the distribution functions as
\begin{equation}\label{23}
U
=
\sum_{k=1}^N \bm{f}_k.
\end{equation}
To recover the macroscopic equation \eqref{21}, it is required that
\begin{equation}\label{24}
\sum_{k=1}^N \bm f_k^{(eq)} =U, \quad \sum_{k=1}^N v_{kj} \bm f_k^{(eq)} = F_j(U), \quad  \sum_{k=1}^N \bm s_k = S(U),
\end{equation}
where $v_{kj}$ is the $j$-th component of $\bm v_k = \lambda \bm e_k$ with $\lambda = \delta_x / \delta_t$.
This can be achieved by choosing $\bm f_k^{(eq)}$ and $\bm s_k$ as
\begin{equation}\label{25}
\bm f_k^{(eq)} = a_k U + \sum_{j=1}^d b_{kj} F_j(U),
\end{equation}
\begin{equation}\label{26}
\bm s_k = a_k S(U).
\end{equation}
Here $a_k, b_{kj}$ are parameters satisfying
\begin{equation}\label{27}
 \sum_{k=1}^N a_k = 1, \quad  \sum_{k=1}^N b_{kj} =0, \quad
 \sum_{k=1}^N v_{kj} a_k = 0, \quad  \sum_{k=1}^N v_{kj} b_{ki} = \delta_{ij}, \quad \forall i,j=1,2,...,d.
\end{equation}

Note that the midpoint rule is employed for the source term in the LBM \eqref{22}.
For convenience of implementation, we rewrite it as
%
\begin{align*}
& \bm{F}_k\bigl(\bm{x}_{i} ,t_{n+1}\bigr)
=
 \bm{f}_k\bigl(\bm{x}_{i} - \bm{e}_k \delta_x, t_n\bigr)
-\omega
\left(
\bm{f}_k-\bm{f}_k^{(eq)}
\right) \bigl(\bm{x}_{i} - \bm{e}_k \delta_x,t_n\bigr)\\
& \hspace{2.5cm}
 +  \frac{1}{2} \delta_t \bm{s}_k\bigl(\bm{x}_{i} - \bm{e}_k \delta_x,t_n\bigr), \\
&
\bm{f}_k\bigl(\bm{x}_{i},t_{n+1}\bigr)
= \bm{F}_k\bigl(\bm{x}_{i},t_{n+1}\bigr)
  + \frac{1}{2} \delta_t \bm{s}_k\bigl(\bm{x}_{i},t_{n+1}\bigr).
\end{align*}
Then $U( \bm{x}_{i},t_{n+1} )$ can be obtained by solving
\begin{equation}\label{28}
U(\bm{x}_{i},t_{n+1})
=
\sum_{k=1}^N \bm{F}_k(\bm{x}_{i},t_{n+1}) + \frac{1}{2} \delta_t S(U(\bm{x}_{i},t_{n+1})).
\end{equation}

The key components of the above LBM for hyperbolic systems are the relaxation parameter $\omega$ and the equilibrium distribution $\bm{f}_k^{(eq)}$. As demonstrated in, e.g., \cite{WissocqLiuAbgrall2025,AnandanRao2024}, the scheme \eqref{22} achieves second-order accuracy when $\omega = 2$. However, for problems involving discontinuities, this setting leads to severe spurious oscillations. While setting $\omega = 1$ can significantly reduce these oscillations, it reduces the scheme to first-order accuracy. 
To retain second-order accuracy in smooth regions while reducing spurious oscillations near discontinuities, in the following we propose an adaptive relaxation parameter $\omega$. Additionally, a global Lax-Friedrichs-type equilibrium is designed to further improve the robustness for discontinuous problems.



\subsection{Adaptive relaxation parameter}

The construction of our adaptive relaxation parameter is based on the observation that the LBM is still second-order accurate for $\omega = 2 - O(\delta_x)$ (see the consistency analysis in the next section). It employs local smoothness indicators constructed from local characteristic projections.
\subsubsection{One-dimensional case}

For the 1D system \eqref{21} with $d=1$, let $A(U)=\partial F/\partial U$ be the Jacobian with eigen-decomposition $A = R \Lambda L$, where $L = R^{-1}$ and $\Lambda=\mathrm{diag}(\lambda^{(1)},\dots,\lambda^{(m)})$. Set $U_i:=U(x_i, t_n)$ and denote $L_i$ as the matrix $L$ evaluated at $U_i$.

At each lattice node $x_i$, define backward and forward differences in the same local characteristic basis:
\[ \Delta_-W_i := L_i\left(U_i-U_{i-1}\right), \qquad \Delta_+W_i := L_i\left(U_{i+1}-U_i\right). \]
For the $r$-th characteristic field, we define the local smoothness indicator as
\begin{equation} \label{eq:smoothness_indicator_1d}
	\theta_i^{(r)} = \frac{ \left| \left(\Delta_+W_i\right)^{(r)} - \left(\Delta_-W_i\right)^{(r)} \right| }{ \left| \left(\Delta_-W_i\right)^{(r)} \right| + \left| \left(\Delta_+W_i\right)^{(r)} \right| + \delta x } + \left| \left(\Delta_-W_i\right)^{(r)} \right| + \left| \left(\Delta_+W_i\right)^{(r)} \right|, 
\end{equation}
where $\left(\Delta_+W_i\right)^{(r)}$ stands for the $r$-th component of $\Delta_+W_i$ and $ r=1,2,\ldots,m$.
Summing over all characteristics gives the total indicator
\begin{equation}\label{eq:nu_1d}
	\nu_i = \sum_{r=1}^{m} \theta_i^{(r)},
\end{equation}
which is then smoothed over three neighboring points:
\begin{equation}\label{eq:nu_smoothed_1d}
	\tilde{\nu}_i = \nu_{i-1} + \nu_i + \nu_{i+1}.
\end{equation}

With the above smoothness indicator $\tilde{\nu}_i$, we adaptively choose the relaxation parameter $\omega_i$ as
\begin{equation}\label{eq:Omega_omega_1d}
	\omega_i = 1 + \frac{1}{1+\tilde{\nu}_i}.
\end{equation}
It is clear that $\omega_i \in (1,2]$ since $\tilde{\nu}_i \geq 0$. Additionally, for smooth solutions,
the numerator of the fractional term in $\theta_i^{(r)}$ is
$O(\delta_x^2)$, whereas its denominator is $O(\delta_x)$. Thus,
$\theta_i^{(r)}=O(\delta_x)$ and consequently
$\tilde{\nu}_i=O(\delta_x)$. This yields
$\omega_i=2-O(\delta_x)$, leading to the second-order accuracy of the
method (see the consistency analysis in the next section). Near
discontinuities, we have $\tilde{\nu}_i\gg1$ and $\omega_i\approx1$,
which introduces additional numerical dissipation to suppress spurious
oscillations and enhance the robustness of the method.

\subsubsection{Multi-dimensional case}


For the general $d$-dimensional system \eqref{21}, let
$A_j(U)=\partial F_j/\partial U$ be the Jacobian in direction $x_j$ and assume it has 
the eigen-decomposition $A_j(U)=R_{j}\Lambda_{j}L_{j}$, where $L_{j}=R_{j}^{-1}$ and $\Lambda_j=\mathrm{diag}(\lambda_j^{(1)},\dots,\lambda_j^{(m)})$. Set $U_i:=U(\bm x_i, t_n)$ and denote $L_{i,j}$ as the matrix $L_j$ evaluated at $U_i$.

Note that the LBM uses a uniform Cartesian grid with the same mesh spacing $ \delta_x $ in all coordinate directions. For each direction $x_j$, define the local backward and forward
characteristic projections at the grid point $\boldsymbol{x}_i$ as
\[
\Delta_{j,-}W_i
:=
L_{i,j}
\left(
U_i-U_{i-\boldsymbol{n}_j}
\right),
\qquad
\Delta_{j,+}W_i
:=
L_{i,j}
\left(
U_{i+\boldsymbol{n}_j}-U_i
\right),
\]
where $\boldsymbol{n}_j$ is the unit multi-index in direction $j$ and $U_{i \pm \boldsymbol{n}_j}:=U(\bm x_{i \pm \boldsymbol{n}_j}, t_n)$.
Similar to the 1D case, define the directional indicator as
\begin{equation}\label{213}
	\theta^{(r,j)}( \bm x_i )
	=
	\frac{
		\left|
		\left(\Delta_{j,+}W_i\right)^{(r)}
		-
		\left(\Delta_{j,-}W_i\right)^{(r)}
		\right|
	}{
		\left|
		\left(\Delta_{j,-}W_i\right)^{(r)}
		\right|
		+
		\left|
		\left(\Delta_{j,+}W_i\right)^{(r)}
		\right|
		+
		\delta x
	}
	+
	\left|
	\left(\Delta_{j,-}W_i\right)^{(r)}
	\right|
	+
	\left|
	\left(\Delta_{j,+}W_i\right)^{(r)}
	\right|.
\end{equation}

Summing \eqref{213} over all characteristics and directions gives
\begin{equation}\label{eq:nu_multi}
	\nu( \bm x_i ) = \sum_{j=1}^{d}\sum_{r=1}^{m} \theta^{(r,j)}( \bm x_i ),
\end{equation}
which is smoothed over the $3^d$ neighboring nodes:
\begin{equation}\label{eq:nu_smoothed_multi}
	\tilde{\nu}( \bm x_i ) = \sum_{\bm{k}\in\{-1,0,1\}^d} \nu( \bm x_{i} + \bm{k} \delta_x).
\end{equation}
The relaxation parameter is then given by
\begin{equation}\label{eq:Omega_omega_multi}
	\omega(\bm x_{i}) = 1 + \frac{1}{ 1+\tilde{\nu}(\bm x_{i}) }.
\end{equation}
Similar to the 1D case, it is easy to see that $\omega(\bm x_{i}) = 2 - O(\delta_x)$ in smooth regions and $\omega(\bm x_{i}) \approx 1$  near discontinuities.


\subsection{Lax-Friedrichs-type equilibrium}

To further improve the robustness for discontinuous problems, we introduce a Lax-Friedrichs-type equilibrium.
Specifically, for the 1D case we define
\begin{equation}\label{eq:alpha_1d}
\bar \alpha = \max_{x_i} \max_{1\le r\le m} |\lambda^{(r)}|,
\end{equation}
where $\lambda^{(r)}$ are the eigenvalues of $A(U_i)$. Then the equilibrium distributions with D1Q3 lattice ($\bm e_0 =0, \bm e_1 = 1, \bm e_2 = -1$) are given by
\begin{subequations}\label{217}
\begin{align}
\bm{f}_0^{(eq)} &= \left(1 - \frac{\alpha}{\lambda}\right)U, \label{eq:eq0_1d} \\
\bm{f}_1^{(eq)} &= \frac{ \alpha U  +  F(U)}{2\lambda}, \label{eq:eqp_1d} \\
\bm{f}_2^{(eq)} &= \frac{\alpha U - F(U)}{2\lambda}, \label{eq:eqm_1d}
\end{align}
\end{subequations}
where $\alpha = \bar \alpha + \epsilon$ with $\epsilon>0$ a small parameter and $\lambda = \delta_x / \delta_t$. Note that $\bm{f}_1^{(eq)}$ and $\bm{f}_2^{(eq)}$ can be written as
\begin{equation*}
\begin{split}
& \bm{f}_1^{(eq)} = \frac{\alpha}{2\lambda} ( U  +  \frac{1}{\alpha}F(U) ) := \frac{\alpha}{2\lambda} F^{+},\\
& \bm{f}_2^{(eq)} = \frac{\alpha}{2\lambda} ( U  -  \frac{1}{\alpha}F(U) ) := \frac{\alpha}{2\lambda} F^{-},
\end{split}
\end{equation*}
where $F^{+}=U  +  \frac{1}{\alpha}F(U) $ and $F^{-}=U  -  \frac{1}{\alpha}F(U) $ are the Lax-Friedrichs fluxes. Thus we refer to the above equilibrium distributions as Lax-Friedrichs-type equilibria.

For the general $d$-dimensional case, let $\alpha_j  = \bar \alpha_j + \epsilon$ with $\epsilon>0$ a small parameter and
\begin{equation}\label{eq:alpha_multi}
\bar \alpha_j = \max_{ \bm x_{i} }  \max_{1\le r\le m} |\lambda^{(r)}( \partial_U F_j )|, \quad j=1,2,...,d.
\end{equation}
Here $\lambda^{(r)}( \partial_U F_j )$ are eigenvalues of $\partial_U F_j$.
For the D$d$Q$(2d+1)$ lattice, we take $\bm e_0=(0,0,...,0)$, $\bm e_j = (0,...,0,1,0,...,0)$ as the unit vector in the $j$-th coordinate direction, and $\bm e_{j+d} = - \bm e_j, j=1,2,...,d$. The equilibrium distributions are then defined by
\begin{subequations}\label{n220}
\begin{align}
\bm{f}_0^{(eq)} &= \left(1 - \frac{\sum_{j=1}^d \alpha_j }{\lambda}\right)U, \label{eq:eq0_multi} \\
\bm{f}_j^{(eq)} &= \frac{\alpha_j U + F_j(U)}{2\lambda},\quad j=1,\dots,d, \label{eq:eqp_multi} \\
\bm{f}_{j+d}^{(eq)} &= \frac{\alpha_j U - F_j(U)}{2\lambda},\quad j=1,\dots,d. \label{eq:eqm_multi}
\end{align}
\end{subequations}

Note that the above equilibrium satisfies conditions in \eqref{27} with $\bm v_j = \lambda \bm e_j$, and thus the consistency is guaranteed. Moreover, under the above Lax-Friedrichs-type equilibrium, the time step is determined by
\begin{equation}\label{eq:dt}
\delta_t = \mathrm{CFL}  \dfrac{\delta_x}{ \sum_{j=1}^d \bar \alpha_j}
=   \mathrm{CFL}  \dfrac{\delta_x}{ \sum_{j=1}^d \max_{ \bm x_{i} }  \max_{1\le r\le m} |\lambda^{(r)}( \partial_U F_j )|} .
\end{equation}
We show in Section \ref{sec4} that the LBM with the above adaptive relaxation parameter and  Lax-Friedrichs-type equilibrium is stable for linear systems under the standard CFL condition $\mathrm{CFL} < 1$.


\section{Consistency analysis}
\setcounter{equation}{0}

In this section, we use the Maxwell iteration \cite{IkenberryTruesdell1956,YongZhaoLuo2016,ZhaoYong2017} to conduct a consistency analysis for the LBM \eqref{22}. 
We show that the present LBM \eqref{22} with the adaptive relaxation parameter $\omega$ is second-order accurate since  $\omega = 2 - O(\delta_x)$.

To begin with, we set $h:=\delta_x, \delta_t = ch:= \frac{1}{\lambda} h$ and expand the left-hand side of \eqref{22}  as
\begin{equation}\label{34}
\begin{split}
 \bm f_k (\bm{x}_\ell + \bm{e}_k h, t_n +  c h)
 &= \sum\limits_{p \ge 0} h^p
       \frac{(c\partial_t + \bm e_k \cdot \nabla)^p}{p!} \bm f_k (\bm{x}_\ell, t_n)\\
 &:= \sum\limits_{p \ge 0} h^p D_{k, p} \bm f_k (\bm{x}_\ell, t_n),
\end{split}
\end{equation}
where $D_{k, p}$ is defined as
$$
D_{k, p}
=
\frac{1}{p!}(c\partial_t + \bm e_k \cdot \nabla)^p.
$$
Denote $\bm{f}_k:=\bm{f}_k(\bm{x}_\ell, t_n)$, then we deduce from the LBM \eqref{22} that
$$
\sum\limits_{ 1 \le p \le 2} h^p D_{k, p} \bm f_k = -\omega
(
\bm{f}_k-\bm{f}_k^{(eq)}
)
+ ch \bm{s}_k + \frac{1}{2} c h^2 D_{k, 1} \bm{s}_k + O(h^3).
$$
Assume $\omega \neq 0$ and define $\tau := 1/ \omega$. The above equation can be written as
$$
\bm{f}_k = \bm{f}_k^{(eq)} - h \tau D_{k, 1}\bm f_k - h^2 \tau D_{k, 2}\bm f_k
+  \tau ch \bm{s}_k + \frac{1}{2} \tau c h^2  D_{k, 1}  \bm{s}_k + O(h^3).
$$
Using the Maxwell iteration \cite{IkenberryTruesdell1956,YongZhaoLuo2016,ZhaoYong2017} for the above equation, \ie, substituting the above expression of $\bm{f}_k$ into its right-hand side, we have
\begin{equation}\label{324}
\begin{split}
\bm{f}_k
& = \bm{f}_k^{(eq)} - h \tau D_{k, 1}\bm{f}_k^{(eq)}
+ h^2(  \tau^2 D^2_{k, 1} - \tau D_{k, 2}  ) \bm{f}_k^{(eq)}\\
& \hspace{5mm}-  \tau^2 ch^2 D_{k, 1} \bm{s}_k
+ \tau ch \bm{s}_k + \frac{1}{2} \tau c h^2 D_{k, 1} \bm{s}_k + O(h^3) \\
& = \bm{f}_k^{(eq)} - h \tau D_{k, 1}\bm{f}_k^{(eq)}
+ h^2 \tau (  \tau -\frac{1}{2} )  D^2_{k, 1} \bm{f}_k^{(eq)}\\
& \hspace{5mm} + \tau ch \bm{s}_k -  \tau(\tau - \frac{1}{2}) ch^2 D_{k, 1} \bm{s}_k + O(h^3).
\end{split}
\end{equation}

Now we sum the expansion \eqref{324} over $k$ to obtain
\begin{equation}\label{326}
\begin{split}
\sum_k \bm{f}_k
=
&U - \tau ch [ \partial_t U
+ \sum_{j=1}^d \partial_{x_j} F_j(U) ]
+  \tau (  \tau -\frac{1}{2} )  h^2 \sum_k D^2_{k, 1} \bm{f}_k^{(eq)} \\
& + \tau ch  S   - \tau(\tau - \frac{1}{2}) ch^2 \sum_k D_{k, 1} \bm{s}_k  + O(h^3), \\
=& U - \tau ch [ \partial_t U
+ \sum_{j=1}^d \partial_{x_j} F_j(U) ]
+ \tau (  \tau -\frac{1}{2} ) h^2  \sum_k D^2_{k, 1} \bm{f}_k^{(eq)} \\
& + \tau ch S  - \tau(\tau - \frac{1}{2}) c^2 h^2 \partial_t S + O(h^3),
\end{split}
\end{equation}
where
$$
\sum_k D_{k, 1} \bm{s}_k
= \sum_k (c\partial_t + \bm e_k \cdot \nabla) \bm{s}_k
=
c \sum_k \partial_t  \bm{s}_k = c \partial_t S
$$
has been used for the second equality. With \eqref{326} and $U=\sum_k \bm{f}_k$, we have
\begin{equation}\label{331}
\begin{split}
 \partial_t U
+ \sum_{j=1}^d \partial_{x_j} F_j(U)
=  S
+  (  \tau -\frac{1}{2} ) \frac{1}{c} h  \sum_k D^2_{k, 1} \bm{f}_k^{(eq)}
- (\tau - \frac{1}{2})ch \partial_t S
+  O(h^2).
\end{split}
\end{equation}
This equation means that the error is $O(h)$ and thus the method is generally first-order accurate. If $\omega = 2 - O(\delta_x) = 2 - O(h)$, then $\tau = \frac{1}{2} + O(h)$, and thereby
$$
(  \tau -\frac{1}{2} ) \frac{1}{c} h  \sum_k D^2_{k, 1} \bm{f}_k^{(eq)} = O(h^2),
\quad
(\tau - \frac{1}{2})ch \partial_t S = O(h^2).
$$
Thus \eqref{331} further becomes
\begin{equation}\label{334}
\begin{split}
 \partial_t U
+ \sum_{j=1}^d \partial_{x_j} F_j(U)
=  S
+  O(h^2),
\end{split}
\end{equation}
which indicates that the LBM has second-order accuracy.

The above analysis shows that under our adaptive relaxation parameter \eqref{eq:Omega_omega_multi} (or \eqref{eq:Omega_omega_1d} for 1D case), the LBM is second-order accurate for smooth solutions.


\section{Weighted \texorpdfstring{$L^2$}{L2}-stability}\label{sec4}
\setcounter{equation}{0}


This section is devoted to proving the weighted $L^2$-stability of the LBM \eqref{22} for linear systems, \ie,  $F_j(U)=A_jU$ with $A_j$ being constant matrices. This is an extension of the classical framework of the weighted $L^2$-stability for the standard  LBM \cite{BandaYongKlar2006,JunkYong2009}. 
In this analysis, the source term is ignored while it can be  treated in the convergence analysis as in \cite{JunkYang2009}. 


Since the system  \eqref{21} is hyperbolic, there exists a symmetric positive-definite matrix $A_0$ such that $A_0 A_j, j=1,2,...,d$ are all symmetric. Additionally, $A_0$ is constant for linear systems.
Then we define
\begin{equation}\label{n41}
\begin{split}
\hat A_0  := 
& \diag( A_0( a_1 I_m + \sum_{j=1}^d b_{1j} A_j )^{-1}, A_0( a_2 I_m + \sum_{j=1}^d b_{2j} A_j )^{-1},\\
 &\hspace{8mm} ..., A_0( a_N I_m + \sum_{j=1}^d b_{Nj} A_j )^{-1} ),
\end{split}
\end{equation}
which is positive definite if so is $A_0(a_k I_m + \sum_{j=1}^d b_{kj} A_j)^{-1}$ for all $k$. This is true if $A_0(a_k I_m + \sum_{j=1}^d b_{kj} A_j), j=1,2,...,d$ are positive definite. The symmetry of $A_0(a_k I_m + \sum_{j=1}^d b_{kj} A_j)$ is obvious and its positive definiteness is equivalent to
\begin{equation}\label{111}
\mbox{the eigenvalues of~} a_k I_m + \sum_{j=1}^d b_{kj} A_j \mbox{~are all positive}, \quad \forall k.
\end{equation}

With the above positive definite matrix $\hat A_0$, we define $ \bm Q_k := -\omega (  \bm f_k - \bm f_k^{(eq)} )$ and show that the collision term $\bm Q(\bm f):=(\bm Q_1^T, \bm Q_2^T, ..., \bm Q_N^T)^T$ admits the following stability structure, which is an extension of that for the standard LBM in \cite{BandaYongKlar2006}.

\begin{thm}[Stability structure]\label{thm21}
Denote $J=\frac{\partial \bm Q(\bm{f})}{\partial \bm f}$.
Under the condition \eqref{111}, there exists an invertible matrix $P \in \mathbb{R}^{Nm \times Nm}$
such that $P^{T} P = \hat A_0$ and
\begin{equation}\label{eqn:StabiliStructure}
  P J P^{-1} = \diag (\lambda_1, \lambda_2, \ldots, \lambda_{Nm} ),
\end{equation}
where each eigenvalue $\lambda_i$ is either $0$ or $-\omega$.

\end{thm}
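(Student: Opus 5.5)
Since the equilibrium is linear for linear systems, the Jacobian is explicit. Define $E_k := a_k I_m + \sum_{j=1}^d b_{kj} A_j$, so that $\bm f_k^{(eq)} = E_k \sum_{l} \bm f_l$. Then
\[
J = -\omega\,(I_{Nm} - E\,\mathbf 1^T), \qquad E := (E_1^T,\dots,E_N^T)^T \in \mathbb R^{Nm\times m}, \quad \mathbf 1^T := (I_m,\dots,I_m)\in\mathbb R^{m\times Nm}.
\]
The first two relations in \eqref{27} give $\mathbf 1^T E = \sum_k E_k = I_m$. Hence $\Pi := E\mathbf 1^T$ is a projection ($\Pi^2=\Pi$) of rank $m$, and $J=-\omega(I-\Pi)$ has eigenvalue $0$ on $\mathrm{range}(\Pi)$ (dimension $m$) and $-\omega$ on $\ker\Pi$ (dimension $(N-1)m$). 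Since $J$ is a polynomial in a projection, it is diagonalizable. The remaining task is to show it is diagonalized by a matrix $P$ with $P^TP=\hat A_0$.

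\textbf{Step 1: $\hat A_0 J$ is symmetric.} By \eqref{n41}, $\hat A_0=\diag(A_0E_1^{-1},\dots,A_0E_N^{-1})$; under \eqref{111} each $E_k$ is invertible. The $(k,l)$ block of $\hat A_0\Pi$ is $A_0E_k^{-1}E_k = A_0$ for all $k,l$, so $\hat A_0\Pi = \mathbf 1 A_0\mathbf 1^T$, which is symmetric. Since $\hat A_0$ is symmetric (each $A_0E_k^{-1}$ is symmetric, as shown before the theorem), $\hat A_0 J = -\omega(\hat A_0 - \mathbf 1A_0\mathbf 1^T)$ is symmetric.

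\textbf{Step 2: positive definiteness of $\hat A_0$.} This is the main obstacle: I must justify that \eqref{111} makes each block $A_0E_k^{-1}$ positive definite. The argument uses that $A_0E_k$ is symmetric and $A_0$ is SPD. Then $A_0^{1/2}E_kA_0^{-1/2} = A_0^{-1/2}(A_0E_k)A_0^{-1/2}$ is symmetric and similar to $E_k$, so its eigenvalues are positive by \eqref{111}. Hence $A_0E_k$ is SPD, and therefore so is $A_0E_k^{-1} = E_k^{-T}(A_0E_k)^TE_k^{-1}$.

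\textbf{Step 3: construction of $P$.} Let $\hat A_0^{1/2}$ be the symmetric square root. Then $\hat A_0^{1/2}J\hat A_0^{-1/2} = \hat A_0^{-1/2}(\hat A_0J)\hat A_0^{-1/2}$ is symmetric, so there is an orthogonal $O$ with $O\hat A_0^{1/2}J\hat A_0^{-1/2}O^T$ diagonal. Set $P := O\hat A_0^{1/2}$. Then $P^TP=\hat A_0$ and $PJP^{-1}$ is diagonal. Its entries are eigenvalues of $J$, which by the projection argument above are $0$ (multiplicity $m$) or $-\omega$ (multiplicity $(N-1)m$). This proves \eqref{eqn:StabiliStructure}.
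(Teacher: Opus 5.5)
Your proof is correct and follows the same route as the paper. You show $\hat A_0 J$ is symmetric because every block of $\hat A_0 E$ equals $A_0$, diagonalize $J$ through a factor $P$ with $P^TP=\hat A_0$, and read off the eigenvalues $0$ and $-\omega$ from the rank-$m$ structure together with $\sum_k E_k=I_m$. Your projection identity $\Pi^2=\Pi$, the explicit choice $P=O\hat A_0^{1/2}$, and your derivation that \eqref{111} makes each $A_0E_k^{-1}$ positive definite just make explicit what the paper states more briefly (the last point in the text preceding the theorem).
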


\begin{proof}
Note that
$
\frac{\partial \bm f_k^{eq}(\bm{f})}{\partial \bm f_i}
 =  a_k I_m + \sum_{j=1}^d b_{kj} A_j \equiv E_k
$ is independent of $i$, thus $\frac{\partial \bm f^{(eq)}(\bm{f})}{\partial \bm f}$ is a block matrix with the submatrix in each row the same, \ie,
\begin{equation}\label{eq:Eblock}
E:=\frac{\partial \bm f^{(eq)}(\bm{f})}{\partial \bm f}
=
\left(
\begin{array}{llll}
E_1 & E_1 & \cdots & E_1 \\
E_2 & E_2 & \cdots & E_2 \\
\vdots & \vdots & \vdots & \vdots\\
E_N & E_N & \cdots & E_N
\end{array}
\right)
=
\left(
\begin{array}{l}
E_1  \\
E_2  \\
\vdots \\
E_N
\end{array}
\right)
(I_m, I_m, ..., I_m).
\end{equation}
Then we compute
\begin{equation*}
\hat A_0 E =
\left(
\begin{array}{llll}
A_0 & A_0 & \cdots & A_0 \\
A_0 & A_0 & \cdots & A_0 \\
\vdots & \vdots & \vdots & \vdots\\
A_0 & A_0 & \cdots & A_0
\end{array}
\right),
\end{equation*}
which is symmetric. Thus $\hat A_0 J$ is symmetric. Since $\hat A_0$ is symmetric and positive definite, there exists an invertible matrix $P$ such that $P^{T} P = \hat A_0$ and $P J P^{-1}$ is diagonal.

Next we analyze the eigenvalues of $E$. According to the decomposition in \eqref{eq:Eblock}, the rank of $E$ is at most $m$.
On the other hand, note that $\sum_k E_k = I_m$, \ie, the sum of submatrices in each column is $I_m$. Thus the matrix $E$ has an eigenvalue 1 with multiplicity being $m$.
Then the eigenvalues of $E$ are 0 and 1, and those of $J=-\omega(I_{Nm}-E)$ are 0 and $-\omega$. This completes the proof.

\end{proof}

With the matrix $P$ in the above stability structure, we define the following weighted $L^2$-norm:
\begin{equation*}
\begin{split}
\Vert \mathbf{f} ( t_{n} )  \Vert_{P}^2
&:=
\sum_{\bm x_{\ell} \in \Omega} (P\bm f( \bm x_{\ell}, t_n ))^T  P\bm f( \bm x_{\ell}, t_n )\\
&=
\sum_{\bm x_{\ell} \in \Omega} \bm f^T( \bm x_{\ell}, t_n ) \hat A_0 \bm f( \bm x_{\ell}, t_n )\\
&=
\sum_{\bm x_{\ell} \in \Omega} \sum_{k=1}^N \bm f_k^T( \bm x_{\ell}, t_n ) A_0( a_k I_m + \sum_{j=1}^d b_{kj} A_j )^{-1} \bm f_k( \bm x_{\ell}, t_n ).
\end{split}
\end{equation*}
Then we have the following stability result, the proof of which is the same as that in \cite{JunkYong2009} for the standard LBM.
%

\begin{thm}\label{thm22}
Assume that condition \eqref{111} holds. If the relaxation parameter satisfies $0\leq \omega \leq 2$
for all grid points and time levels, then the solution of the LBM \eqref{22} without source term satisfies
\begin{equation}\label{45}
	\| \bm{f}(t_{n+1})\|_P \leq \| \bm{f}(t_n)\|_P
\end{equation}
under periodic boundary conditions.
\end{thm}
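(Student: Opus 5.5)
The plan is to split one LBM step into the local collision and the streaming, and to show that neither increases $\Vert\cdot\Vert_P$. Write the scheme without source as $\bm f^{*}(\bm x_\ell)=\bm f(\bm x_\ell,t_n)+\bm Q(\bm f(\bm x_\ell,t_n))$, followed by $\bm f_k(\bm x_\ell+\bm e_k\delta_x,t_{n+1})=\bm f_k^{*}(\bm x_\ell)$.

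\textbf{Streaming.} Streaming only shifts each component $\bm f_k$ on the periodic lattice. The weight $\hat A_0$ is block diagonal with constant blocks $A_0(a_kI_m+\sum_j b_{kj}A_j)^{-1}$. Hence
\[
\sum_{\ell}\bm f_k^T(\bm x_\ell,t_{n+1})\,A_0\Bigl(a_kI_m+\sum_j b_{kj}A_j\Bigr)^{-1}\bm f_k(\bm x_\ell,t_{n+1})
\]
equals the same sum for $\bm f_k^{*}$, by reindexing $\bm x_\ell\mapsto \bm x_\ell+\bm e_k\delta_x$. Summing over $k$ gives $\Vert\bm f(t_{n+1})\Vert_P=\Vert\bm f^{*}\Vert_P$. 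Periodicity and the constancy of $A_0$ and $A_j$ (linear system) are what make this work.

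\textbf{Collision.} This is a pointwise estimate, and it is where Theorem~\ref{thm21} enters. For linear fluxes $\bm Q$ is linear in $\bm f$, so $\bm Q(\bm f)=J\bm f$ with $J=-\omega(I_{Nm}-E)$. Here $\omega=\omega(\bm x_\ell,t_n)$ is a scalar at each node; it may vary between nodes and time levels. The key observation is that $P$ in Theorem~\ref{thm21} does not depend on $\omega$. Indeed, $P$ diagonalizes $E$ itself, since $\hat A_0E$ is symmetric and $E$ has eigenvalues $0,1$. So the same $P$ works at every node, and only the eigenvalues $-\omega$ change.

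Setting $\bm g=P\bm f$, at each node we get
\[
P\bm f^{*}=(I+PJP^{-1})\bm g=\diag(1+\lambda_i)\,\bm g,
\]
with $1+\lambda_i\in\{1,\,1-\omega\}$. The condition $0\le\omega\le2$ gives $|1-\omega|\le1$. Therefore $|P\bm f^{*}(\bm x_\ell)|\le|P\bm f(\bm x_\ell,t_n)|$ at every node. Summing over $\bm x_\ell$ yields $\Vert\bm f^{*}\Vert_P\le\Vert\bm f(t_n)\Vert_P$, and combining with the streaming step gives \eqref{45}.

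\textbf{Main obstacle.} The step needing care is justifying that $P$ is independent of the local $\omega$. Condition \eqref{111} guarantees that $\hat A_0$ is symmetric positive definite, so $P$ can be taken as $\hat A_0^{1/2}$ composed with an orthogonal matrix. That orthogonal matrix diagonalizes the symmetric matrix $\hat A_0^{1/2}E\hat A_0^{-1/2}$, which involves only $E$ and not $\omega$. Spelling this out, rather than quoting Theorem~\ref{thm21} for a fixed $\omega$, is what lets the node-dependent relaxation parameter pass through the argument unchanged.
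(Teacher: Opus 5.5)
Your proposal is correct and is essentially the argument the paper invokes by deferring to Junk--Yong. The collision step is a pointwise contraction in the $P$-weighted norm because $P(I+J)P^{-1}$ is diagonal with entries $1$ or $1-\omega$, and $|1-\omega|\le 1$ for $0\le\omega\le 2$. The streaming step is an isometry because $\hat A_0$ is block diagonal with constant blocks on a periodic lattice. You also observe that $P$ can be built from $E$ alone, so it does not depend on the node- and time-dependent $\omega$; this fills in a detail the paper leaves implicit.
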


Applying the above general result to the proposed LBM with adaptive relaxation parameter and Lax-Friedrichs-type equilibria, we have
\begin{thm}\label{thm43}
If the $\mathrm{CFL}$ number defined in \eqref{eq:dt} satisfies $\mathrm{CFL} < 1$ and $\alpha_j=\bar{\alpha}_j+\epsilon, j=1,\ldots,d,$ with $0 < \epsilon < \frac{1}{d}( \frac{1}{\mathrm{CFL}} - 1) \sum_{j=1}^d \bar \alpha_j$, then the LBM \eqref{22} with the adaptive relaxation parameter \eqref{eq:Omega_omega_multi} and Lax-Friedrichs-type equilibria \eqref{n220} possesses the weighted $L^2$-stability  \eqref{45} for periodic problems.
\end{thm}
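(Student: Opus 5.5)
The plan is to deduce Theorem~\ref{thm43} directly from Theorem~\ref{thm22}. That reduces the proof to checking the two hypotheses of Theorem~\ref{thm22} for the specific choices \eqref{eq:Omega_omega_multi} and \eqref{n220}:
\begin{itemize}
\item the range condition $0\le\omega\le 2$ at every node and time level;
\item the positivity condition \eqref{111}.
\end{itemize}
As a preliminary, the equilibrium \eqref{n220} is of the form \eqref{25} with $a_0=1-\sum_{j}\alpha_j/\lambda$, $b_{0j}=0$, $a_j=a_{j+d}=\alpha_j/(2\lambda)$, $b_{ji}=\delta_{ij}/(2\lambda)$ and $b_{j+d,i}=-\delta_{ij}/(2\lambda)$. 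These coefficients satisfy \eqref{27} with $\bm v_k=\lambda\bm e_k$, which the paper already notes. Hence the LBM fits the framework of Section~\ref{sec4}, with $F_j(U)=A_jU$.

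\textbf{Range of $\omega$.} Since $\tilde\nu(\bm x_i)\ge 0$ by construction (it is a sum of absolute values and nonnegative quotients), \eqref{eq:Omega_omega_multi} gives $\omega(\bm x_i)\in(1,2]\subset[0,2]$. This holds at every node and at every time level. Theorem~\ref{thm22} permits $\omega$ to vary in space and time, so nothing further is needed here.

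\textbf{Condition \eqref{111}.} I will check it velocity by velocity, noting first that in the linear case the eigenvalues of each $A_j$ are real (hyperbolicity, via the symmetrizer $A_0$) and $\bar\alpha_j=\max_r|\lambda^{(r)}(A_j)|$ is a constant.
\begin{itemize}
\item For $k=j$, $j=1,\dots,d$, the matrix is $E_j=(\alpha_jI_m+A_j)/(2\lambda)$. Its eigenvalues are $(\alpha_j+\lambda_j^{(r)})/(2\lambda)\ge(\alpha_j-\bar\alpha_j)/(2\lambda)=\epsilon/(2\lambda)>0$.
\item For $k=j+d$, the matrix is $E_{j+d}=(\alpha_jI_m-A_j)/(2\lambda)$, and the same bound holds.
\item For $k=0$, the matrix is $a_0I_m$, so I need $\sum_j\alpha_j<\lambda$. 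By \eqref{eq:dt}, $\lambda=\delta_x/\delta_t=\sum_j\bar\alpha_j/\mathrm{CFL}$, while $\sum_j\alpha_j=\sum_j\bar\alpha_j+d\epsilon$. So the requirement is $d\epsilon<(1/\mathrm{CFL}-1)\sum_j\bar\alpha_j$, which is exactly the assumed upper bound on $\epsilon$. This bound is a nonempty interval precisely because $\mathrm{CFL}<1$.
\end{itemize}

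With both hypotheses verified, Theorem~\ref{thm22} yields \eqref{45} for periodic problems.

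The main (mild) obstacle is the rest-velocity condition $a_0>0$. It is the only place where the CFL restriction and the size of $\epsilon$ enter, and it requires translating \eqref{eq:dt} into a bound on $\lambda$. I would also remark that when $\bar\alpha_j=0$ for all $j$ the statement is degenerate, so implicitly $\sum_j\bar\alpha_j>0$.
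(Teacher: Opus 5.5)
Your proposal is correct and follows the paper's proof essentially step for step. You reduce to Theorem~\ref{thm22}, note that \eqref{eq:Omega_omega_multi} gives $\omega\in(1,2]$, and verify \eqref{111} through $\alpha_j>\bar\alpha_j$ for the moving velocities and $\sum_j\alpha_j<\lambda=\sum_j\bar\alpha_j/\mathrm{CFL}$ for the rest velocity, which is exactly the assumed bound on $\epsilon$; your explicit eigenvalue bound $(\alpha_j\pm\lambda_j^{(r)})/(2\lambda)\ge\epsilon/(2\lambda)>0$ simply spells out a detail the paper leaves implicit.
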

\begin{proof}
Note that our adaptive relaxation parameter \eqref{eq:Omega_omega_multi} automatically satisfies the condition $\omega \in [0,2]$.
To verify the condition \eqref{111} for the Lax-Friedrichs-type equilibria \eqref{n220}, it is sufficient to require
$$
\alpha_j > \bar \alpha_j, \quad  \sum_{j=1}^d \alpha_j < \lambda.
$$
Recall that $\alpha_j = \bar \alpha_j + \epsilon$ with $\epsilon >0$ and thus $\alpha_j > \bar \alpha_j$ is true.
On the other hand, according to \eqref{eq:dt}, we have $\lambda = \frac{\delta_x}{\delta_t} = \frac{\sum_{j=1}^d \bar \alpha_j}{\mathrm{CFL}}$. Then $\sum_{j=1}^d \alpha_j  < \lambda$ becomes
$$
\sum_{j=1}^d \bar \alpha_j + d \epsilon < \frac{1}{\mathrm{CFL}} \sum_{j=1}^d \bar \alpha_j,
$$
which holds if $\epsilon < \frac{1}{d}( \frac{1}{\mathrm{CFL}} - 1) \sum_{j=1}^d \bar \alpha_j$. The proof is complete.

\end{proof}

\begin{rem}

The stability condition $\mathrm{CFL}<1$ obtained here is standard under the present weighted \(L^2\)-stability framework.
In comparison, for the LBM with D2Q5 lattice proposed in \cite{WissocqLiuAbgrall2025}, the convex-preserving property of the first-order scheme requires the more restrictive condition $ \mathrm{CFL}\leq \frac{1}{4} $ (see Proposition 2.7 of \cite{WissocqLiuAbgrall2025}). Thus, under the present weighted \(L^2\)-stability criterion, the proposed Lax--Friedrichs-type equilibrium admits a less restrictive CFL condition.

\end{rem}


\begin{rem}
With the weighted \(L^2\)-stability in Theorem \ref{thm22}, convergence can be obtained by combining the stability estimate with the consistency analysis, following the standard argument in \cite{JunkYang2009}.
However, for nonlinear hyperbolic systems, proving weighted \(L^2\)-stability, and hence convergence, remains challenging.
A similar restriction
applies to the \(L^\infty\)-stability analysis in \cite{Graille2014}, which is
established only for scalar equations. We also note that the recent works
\cite{GuillonHelieHelluy2024,BellottiHelluyNavoret2025} developed
entropy-stable LBMs for nonlinear hyperbolic systems. These approaches,
however, require suitable convex kinetic entropies, whose construction remains
unresolved for several systems, \eg, the compressible Euler
equations.
\end{rem}

\section{Numerical experiments}
\setcounter{equation}{0}
In this section, we validate the accuracy and robustness of the present LBM for several typical hyperbolic equations, including the linear advection equation, Euler equations, shallow-water equations, and reactive Euler equations. 
In all our examples, we set $\epsilon=10^{-10}$, which satisfies the condition in Theorem \ref{thm43}. 


\subsection{Linear advection equations}
We first use linear advection to isolate the effects of smoothness and discontinuities. Periodic boundary conditions are imposed in all tests.

\noindent\textbf{Example 5.1 (Smooth advection).}
Consider the 1D linear advection equation
\[
u_t+u_x=0,\qquad x\in[0,1]
\]
with initial data $u(x,0)=\sin(2\pi x)$. The exact solution is $u(x,t)=\sin(2\pi(x-t))$, and we set $\mathrm{CFL}=0.9$. 
As shown in Table~\ref{tab:linear_no_source}, the error at $t=1$ decreases at the expected second-order rate. The smoothness indicator remains small, so $\omega$ stays close to $2$ without degrading the accuracy of the underlying scheme.
\begin{table}[htbp]
\centering
\begin{tabular}{cccc}
\toprule
$N_x$ & $\delta_x$ & $L^2$-error & Order\\
\midrule
400&$2.500\times10^{-3}$&$2.5507\times10^{-4}$&$--$\\
800&$1.250\times10^{-3}$&$6.3095\times10^{-5}$&2.0153\\
1600&$6.250\times10^{-4}$&$1.6364\times10^{-5}$&1.9470\\
3200&$3.125\times10^{-4}$&$4.1924\times10^{-6}$&1.9647\\
6400&$1.563\times10^{-4}$&$9.7916\times10^{-7}$&2.0982\\
\bottomrule
\end{tabular}
\caption{Example 5.1: Relative $L^2$-errors and convergence orders for smooth linear advection at $t=1$.}
\label{tab:linear_no_source}
\end{table}

\noindent\textbf{Example 5.2 (1D square wave).}
In this example, we solve $u_t+u_x=0$ on $[0,1]$ with initial data
\[
u(x,0)=\begin{cases}1,&x\in[0.25,0.75],\\0,&\text{otherwise}.
\end{cases}
\]
With periodic boundary conditions, the exact solution is the translated initial profile, \ie, $u(x,t)=u_0(x-t)$. The computation is advanced to $t=1$ with $\mathrm{CFL}=0.9$. Figure~\ref{fig:fangbo1D} compares the numerical and exact solutions on successively refined meshes; the right panel shows $\omega$ for $N_x=800$. The two discontinuities are captured without visible spurious oscillations, and the numerical transition layers narrow under refinement. The relaxation parameter remains close to $2$ in the constant regions and decreases only near the jumps. Thus, the present LBM introduces additional dissipation locally without unnecessarily smearing the solution away from the discontinuities.
\begin{figure}[htbp]
\centering
\begin{minipage}{0.45\textwidth}\centering\includegraphics[width=\linewidth]{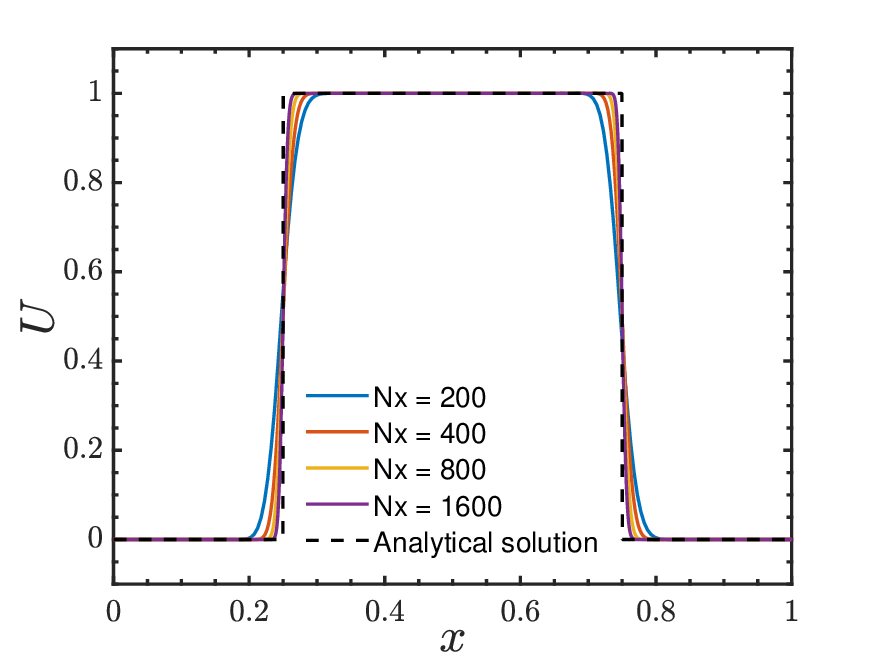}\end{minipage}\quad
\begin{minipage}{0.45\textwidth}\centering\includegraphics[width=\linewidth]{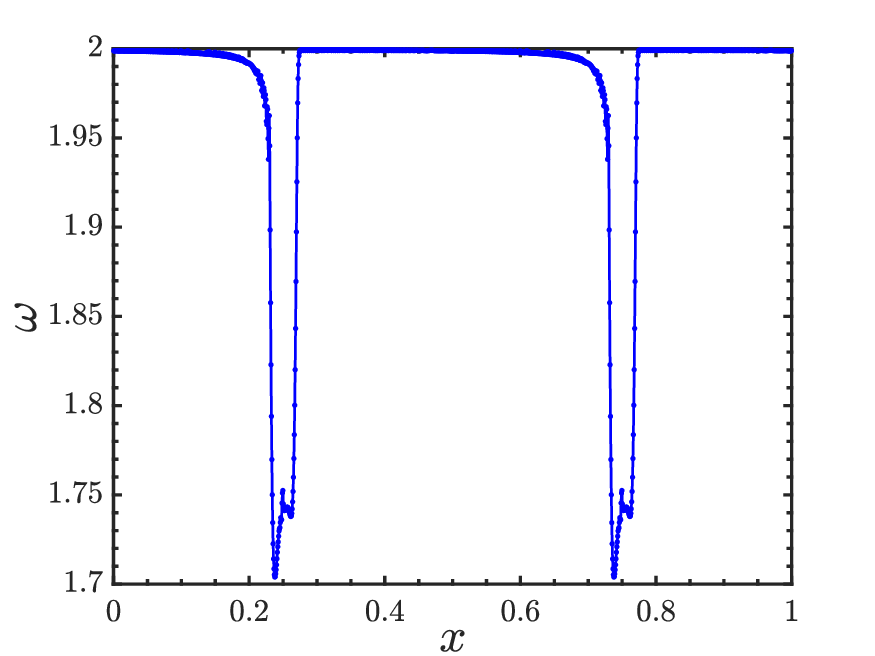}\end{minipage}
\caption{Example 5.2: Left: Numerical and exact square-wave profiles under mesh refinement; Right: distribution of the adaptive relaxation parameter $\omega$ with $N_x=800$ at $t=1$.}
\label{fig:fangbo1D}
\end{figure}

\subsection{Euler equations}
We next compute the Euler equations. We first verify accuracy with smooth solutions and then simulate shock-dominated flows. Periodic and transmissive boundary conditions are used for the smooth and shock-tube problems, respectively.

\noindent\textbf{Example 5.3 (1D entropy wave).}
On $x\in[0,1]$, let
\[
\rho(x,0)=1+0.2\sin(2\pi x),\qquad u(x,0)=1,\qquad p(x,0)=1.
\]
The exact density is $\rho(x,t)=1+0.2\sin(2\pi(x-t))$, while velocity and pressure remain constant. The computation is carried out to $t=1$ with $\mathrm{CFL}=0.6$. Table~\ref{tab:Ola_1D} shows second-order convergence of the density error under mesh refinement. The characteristic-projection-based sensor keeps $\omega$ close to $2$, confirming that it introduces no excessive dissipation for smooth solutions of the Euler equations.
\begin{table}[htbp]
\centering
\begin{tabular}{cccc}
\toprule
$N_x$&$\delta_x$&$L^2$-error&Order\\
\midrule
400&$2.500\times10^{-3}$&$2.0196\times10^{-4}$&$--$\\
800&$1.250\times10^{-3}$&$5.0515\times10^{-5}$&1.9992\\
1600&$6.250\times10^{-4}$&$1.2841\times10^{-5}$&1.9759\\
3200&$3.125\times10^{-4}$&$3.3697\times10^{-6}$&1.9736\\
6400&$1.563\times10^{-4}$&$8.0990\times10^{-7}$&2.0133\\
\bottomrule
\end{tabular}
\caption{Example 5.3: Relative $L^2$-errors and convergence orders for 1D entropy-wave advection of the Euler equations at $t=1$.}
\label{tab:Ola_1D}
\end{table}

\noindent\textbf{Example 5.4 (2D diagonal entropy wave).}
On $[0,1]^2$, the initial state is
\[
\rho(x,y,0)=1+0.2\sin(2\pi(x+y)),\qquad u(x,y,0)=v(x,y,0)=p(x,y,0)=1.
\]
The exact density is $\rho(x,y,t)=1+0.2\sin(2\pi(x+y-2t))$, while velocity and pressure remain constant. The computation is performed to $t=0.3$ with $\mathrm{CFL}=0.3$. As shown in Table~\ref{tab:O_2D_nonlinear}, the convergence rates are pre-asymptotic on the coarser grids, where the diagonal wave is not yet sufficiently resolved and the adaptive indicator introduces some localized dissipation. Under mesh refinement, the observed rate increases to $1.96$, indicating recovery of the second-order convergence for smooth solutions of 2D Euler equations.
\begin{table}[htbp]
\centering
\begin{tabular}{cccc}
\toprule
$N_x$&$\delta_x$&$L^2$-error&Order\\
\midrule
40&$2.500\times10^{-2}$&$2.3553\times10^{-2}$&$--$\\
80&$1.250\times10^{-2}$&$8.2925\times10^{-3}$&1.5060\\
160&$6.250\times10^{-3}$&$2.5326\times10^{-3}$&1.7112\\
320&$3.125\times10^{-3}$&$7.1705\times10^{-4}$&1.8205\\
640&$1.562\times10^{-3}$&$1.9068\times10^{-4}$&1.9109\\
\bottomrule
\end{tabular}
\caption{Example 5.4: Relative $L^2$-errors and convergence orders for 2D diagonal entropy-wave advection of the Euler equations at $t=0.3$.}
\label{tab:O_2D_nonlinear}
\end{table}

We next assess shock capturing and the resolution of small-scale structures using the classical Lax, Sod, and Shu--Osher problems \cite{Lax1954,Sod1978,ShuOsher1989}. These tests contain rarefaction waves, shocks, and contact discontinuities; the Shu--Osher problem additionally involves the interaction of a shock with high-frequency density fluctuations. Transmissive boundary conditions and $\mathrm{CFL}=0.3$ are used throughout. Tables~\ref{tab:shock_config} 
summarize the computational configurations and initial data. Exact Riemann solutions serve as references for the Lax and Sod problems. For the Shu--Osher problem, the reference solution is computed on the same uniform mesh using a fifth-order WENO scheme with global Lax--Friedrichs flux splitting and a third-order TVD Runge--Kutta time integration.
\begin{table}[htbp]
\centering
\begin{tabular}{lcccc}\toprule
Problem&Domain&Final time&Left state $(\rho,u,p)$&Right state $(\rho,u,p)$\\\midrule
Sod&$[0,1]$&0.2&$(1,0,1)$&$(0.125,0,0.1)$\\
Lax&$[0,1]$&0.14&$(0.445,0.698,3.528)$&$(0.5,0,0.571)$\\
Shu--Osher&$[-5,5]$&1.8&$(3.857,2.629,10.333)$&$(1+0.2\sin(5x),0,1)$\\\bottomrule
\end{tabular}
\caption{Examples 5.5--5.7: Computational configurations and initial data for the shock tube problems.}
\label{tab:shock_config}
\end{table}


\noindent\textbf{Example 5.5 (Lax shock tube).}
The Lax problem \cite{Lax1954} contains a rarefaction wave, a contact discontinuity, and a strong shock. Figure~\ref{fig:lax} shows close agreement between the present LBM and the exact solution. Both the contact discontinuity and the shock are captured without visible spurious oscillations. The relaxation parameter decreases primarily near steep gradients and remains close to $2$ elsewhere, thereby retaining low dissipation in the smooth portions of the flow.
\begin{figure}[htbp]\centering
\begin{minipage}{0.44\textwidth}\includegraphics[width=\linewidth]{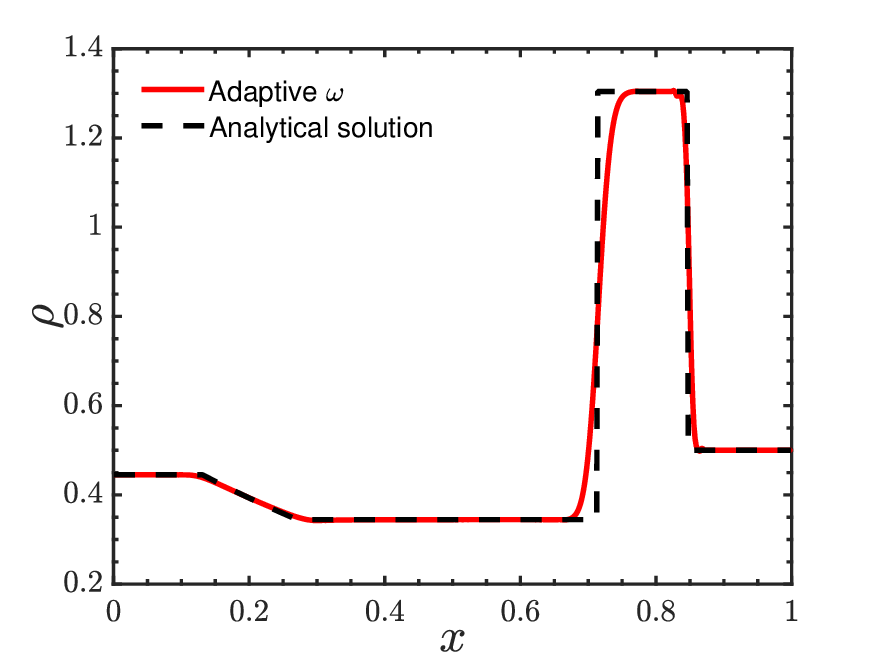}\end{minipage}\quad
\begin{minipage}{0.44\textwidth}\includegraphics[width=\linewidth]{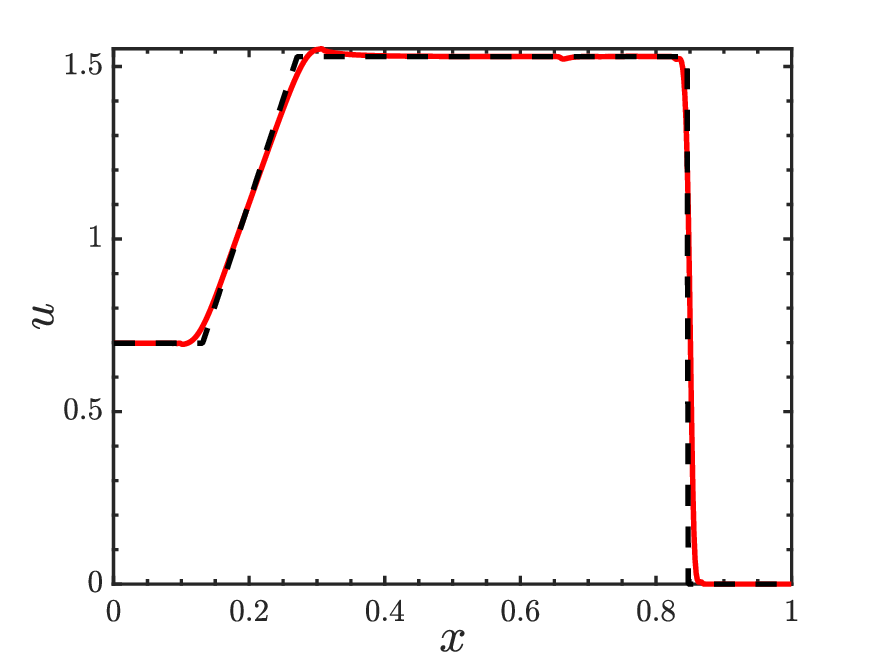}\end{minipage}\\
\begin{minipage}{0.44\textwidth}\includegraphics[width=\linewidth]{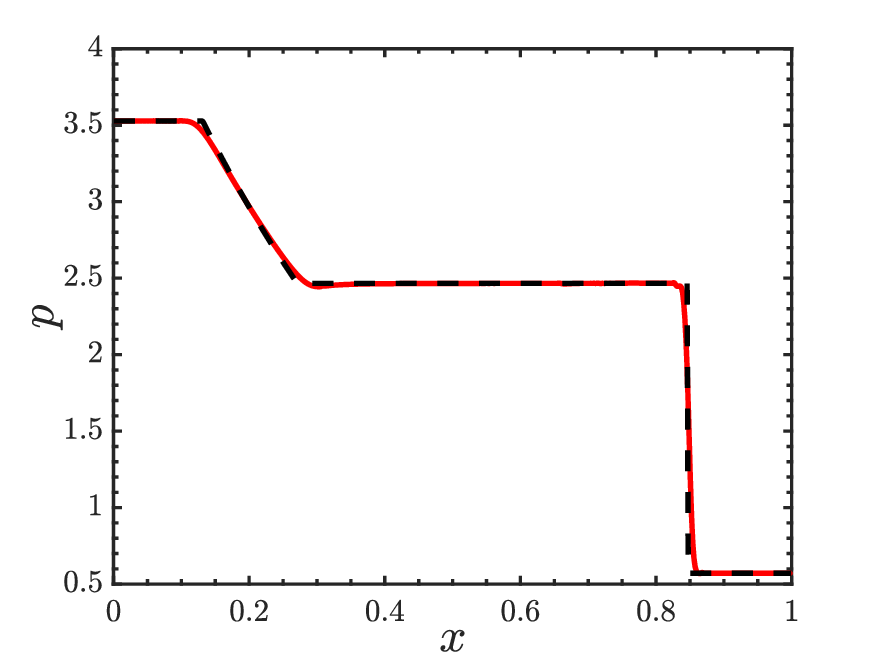}\end{minipage}\quad
\begin{minipage}{0.44\textwidth}\includegraphics[width=\linewidth]{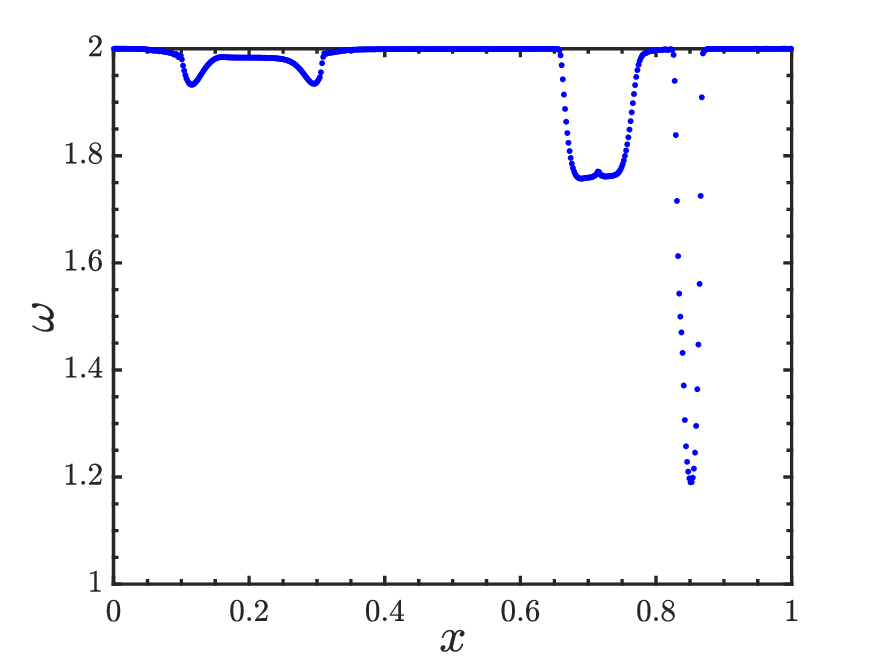}\end{minipage}
\caption{Example 5.5: Lax shock-tube problem at $t=0.14$ with $N_x=600$: density, velocity, pressure, and adaptive relaxation parameter.}\label{fig:lax}
\end{figure}

\noindent\textbf{Example 5.6 (Sod shock tube).}
For the Sod problem \cite{Sod1978}, we compare the present LBM with the fixed-$\omega=1$ scheme, a reproduced relaxed local maximum-principle (RLMP) LBM \cite{WissocqLiuAbgrall2025}, and the exact solution. The global profiles in Figure~\ref{fig:sod} show that the fixed-$\omega=1$ scheme is visibly more dissipative, whereas the present LBM and RLMP method retain sharper wave structures. In the enlarged views in Figure~\ref{fig:sod_duibi}, the present LBM follows the exact profiles smoothly, while the reproduced RLMP solution exhibits small, localized pointwise deviations. Table~\ref{tab:sod_tv_cpu} reports the local total variation over the domain $I=[0.15,0.35]$ and the measured MATLAB CPU time under the same computational setting. For this test and implementation, the present LBM gives smaller local total variations in density, velocity, and pressure and requires less CPU time, indicating smoother local profiles at a lower computational cost.
\begin{figure}[htbp]\centering
\begin{minipage}{0.44\textwidth}\includegraphics[width=\linewidth]{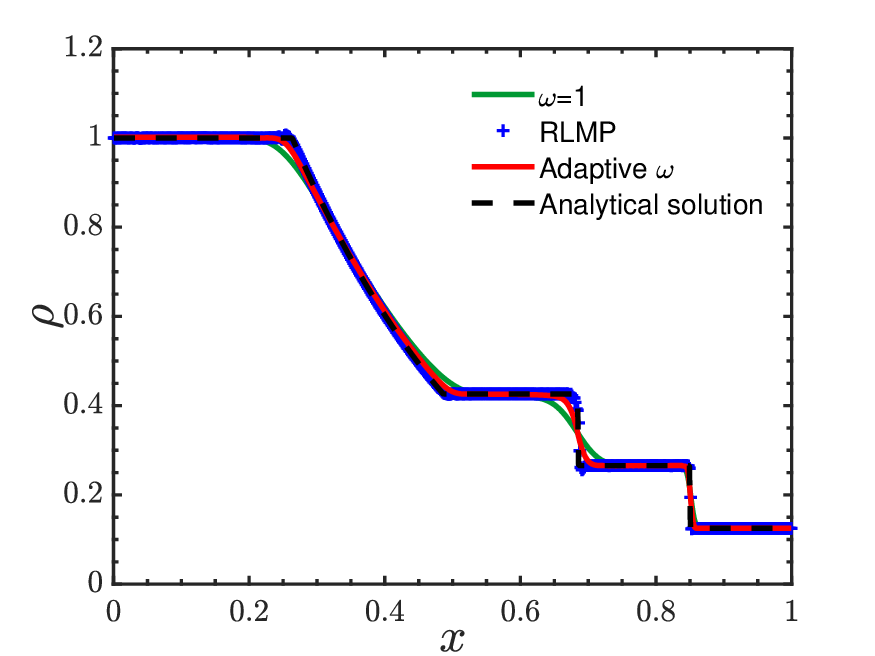}\end{minipage}\quad
\begin{minipage}{0.44\textwidth}\includegraphics[width=\linewidth]{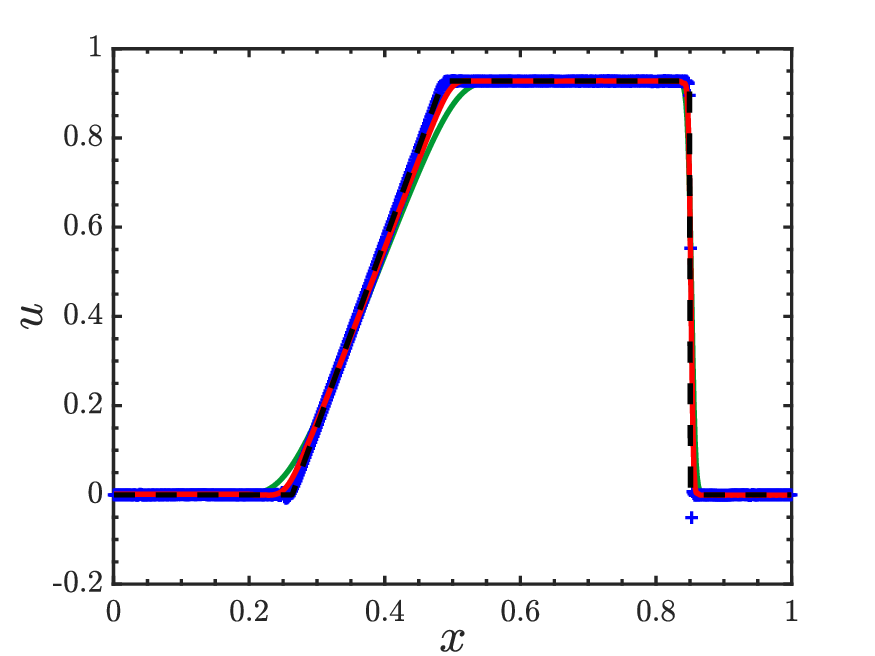}\end{minipage}\\
\begin{minipage}{0.44\textwidth}\includegraphics[width=\linewidth]{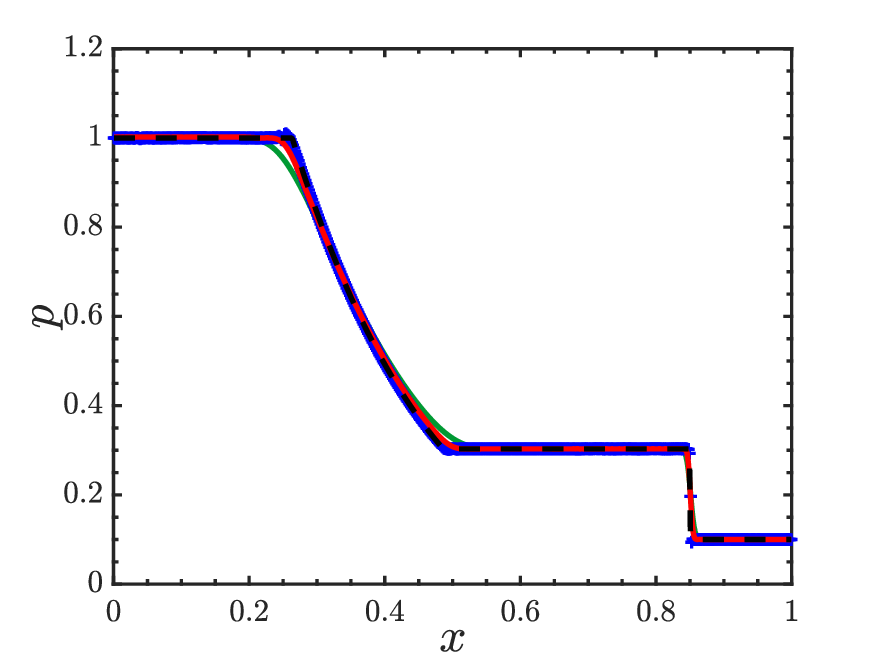}\end{minipage}\quad
\begin{minipage}{0.44\textwidth}\includegraphics[width=\linewidth]{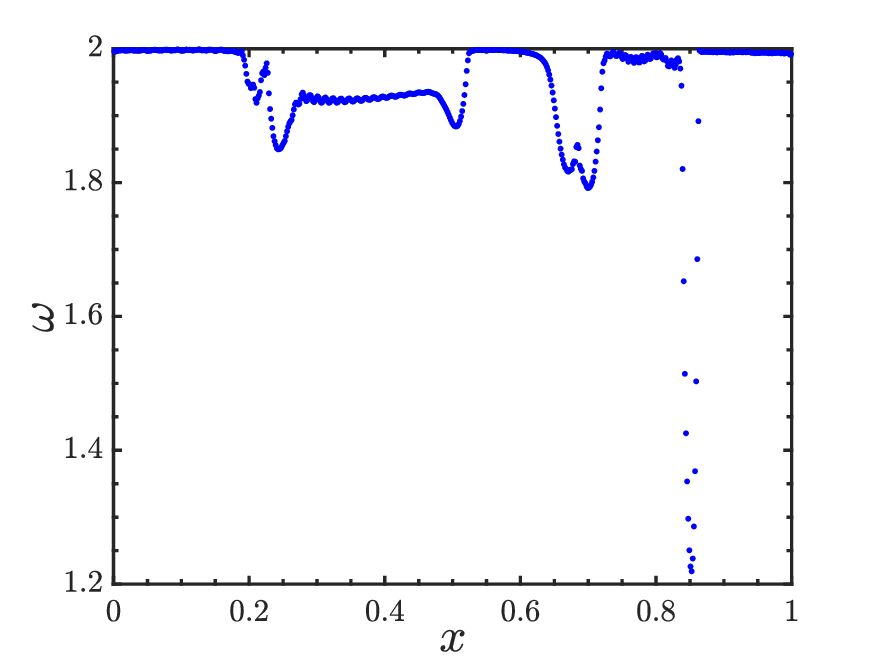}\end{minipage}
\caption{Example 5.6: Sod shock-tube problem at $t=0.2$ with $N_x=600$: density, velocity, pressure, and adaptive relaxation parameter.}\label{fig:sod}
\end{figure}
\begin{figure}[htbp]\centering
\begin{subfigure}{0.32\textwidth}\includegraphics[width=\linewidth]{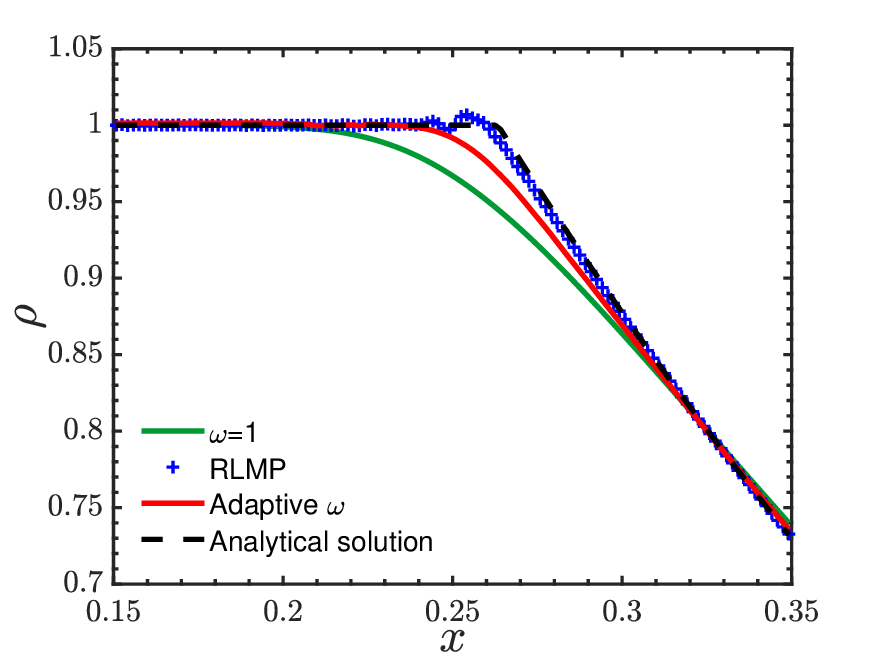}\end{subfigure}\hfill
\begin{subfigure}{0.32\textwidth}\includegraphics[width=\linewidth]{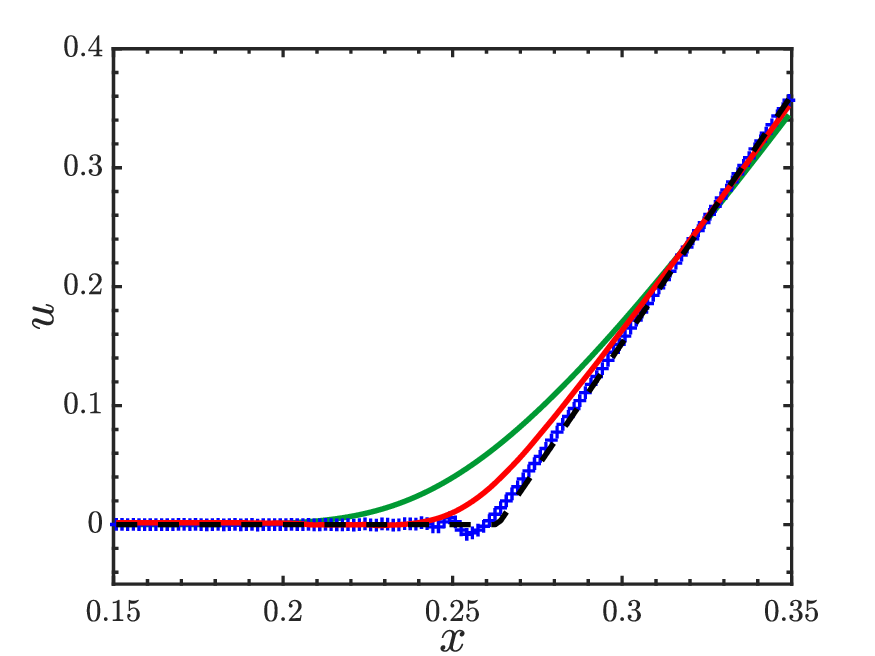}\end{subfigure}\hfill
\begin{subfigure}{0.32\textwidth}\includegraphics[width=\linewidth]{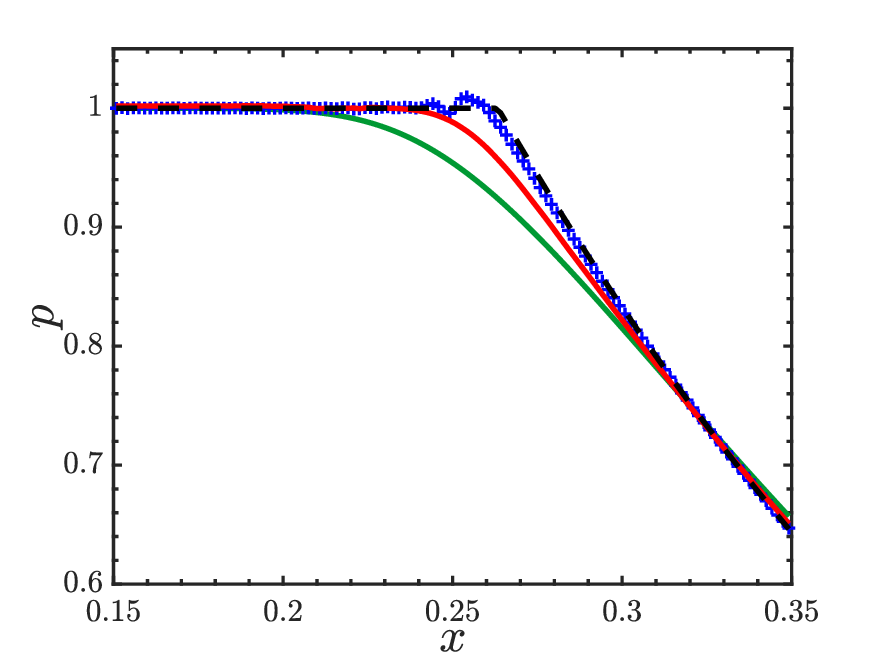}\end{subfigure}
\caption{Example 5.6: Enlarged density, velocity, and pressure profiles near the head of the Sod rarefaction wave.}\label{fig:sod_duibi}
\end{figure}
\begin{table}[htbp]\centering
\begin{tabular}{ccccc}\toprule
Method&$TV(\rho)$&$TV(u)$&$TV(p)$&CPU time (s)\\\midrule
RLMP LBM \cite{WissocqLiuAbgrall2025}&0.3063&0.4031&0.4073&0.1709\\
Present LBM&0.2718&0.3567&0.3570&0.1293\\\bottomrule
\end{tabular}
\caption{Example 5.6: Local total variation and measured CPU time for the reproduced RLMP LBM \cite{WissocqLiuAbgrall2025} and the present LBM over the domain $I=[0.15,0.35]$.}\label{tab:sod_tv_cpu}
\end{table}

\noindent\textbf{Example 5.7 (Shu--Osher problem).}
The Shu--Osher problem \cite{ShuOsher1989} examines the interaction of a shock with a smooth entropy wave. Figure~\ref{fig:shuosher} shows that the fixed-$\omega=1$ scheme strongly damps the post-shock oscillations, whereas the RLMP LBM and the present LBM retain the principal oscillatory structures and agree well with the WENO reference. The enlarged profiles in Figure~\ref{fig:shu_duibi} reveal a small, localized pointwise deviation in the result of the reproduced RLMP LBM near the shock-transition cell, most clearly in velocity and pressure. By comparison, the present LBM remains smooth and close to the reference solution. These results indicate that adaptive relaxation stabilizes the shock locally without imposing excessive dissipation on the smooth oscillatory region.
\begin{figure}[htbp]\centering
\begin{minipage}{0.44\textwidth}\includegraphics[width=\linewidth]{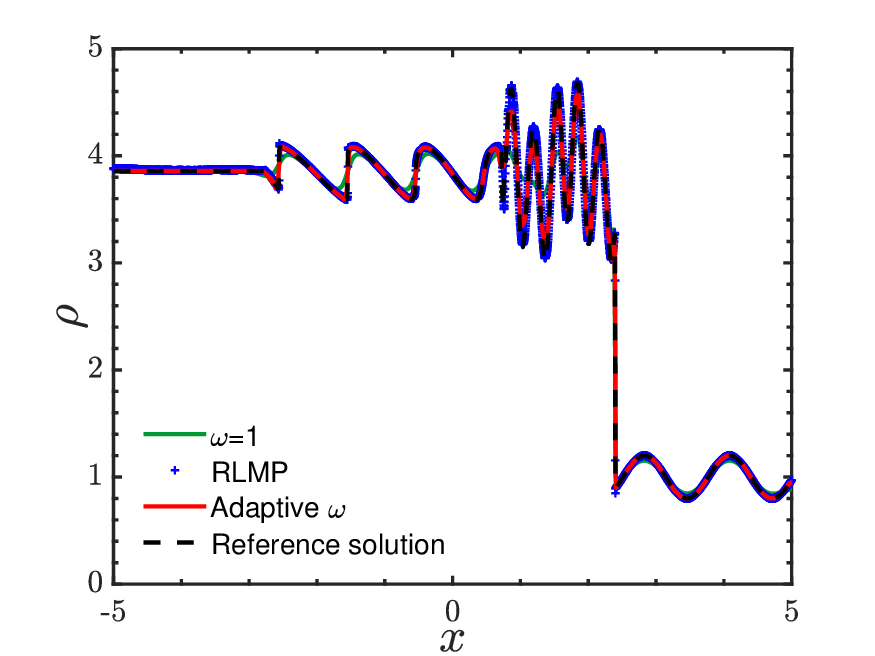}\end{minipage}\quad
\begin{minipage}{0.44\textwidth}\includegraphics[width=\linewidth]{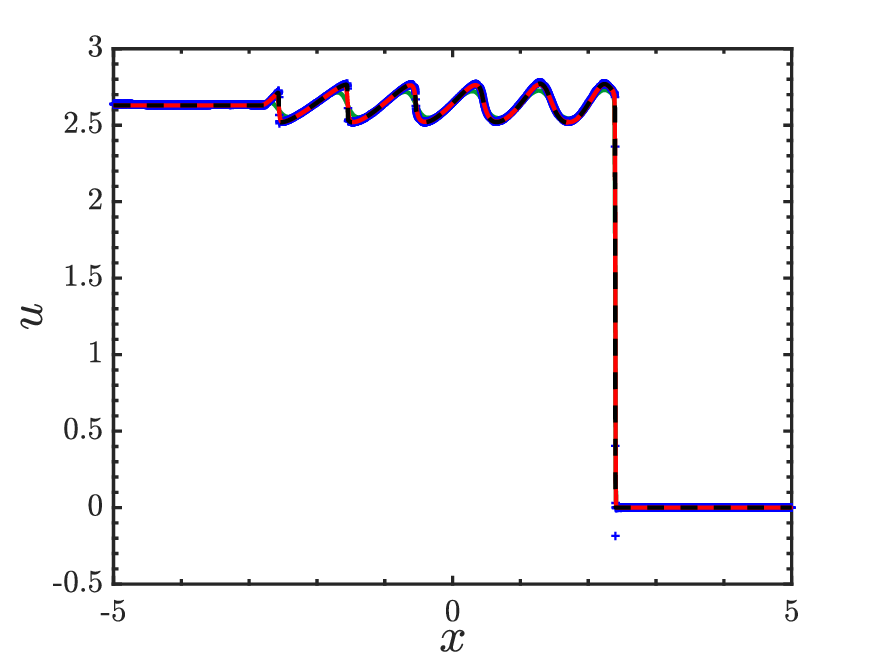}\end{minipage} \\
\begin{minipage}{0.44\textwidth}\includegraphics[width=\linewidth]{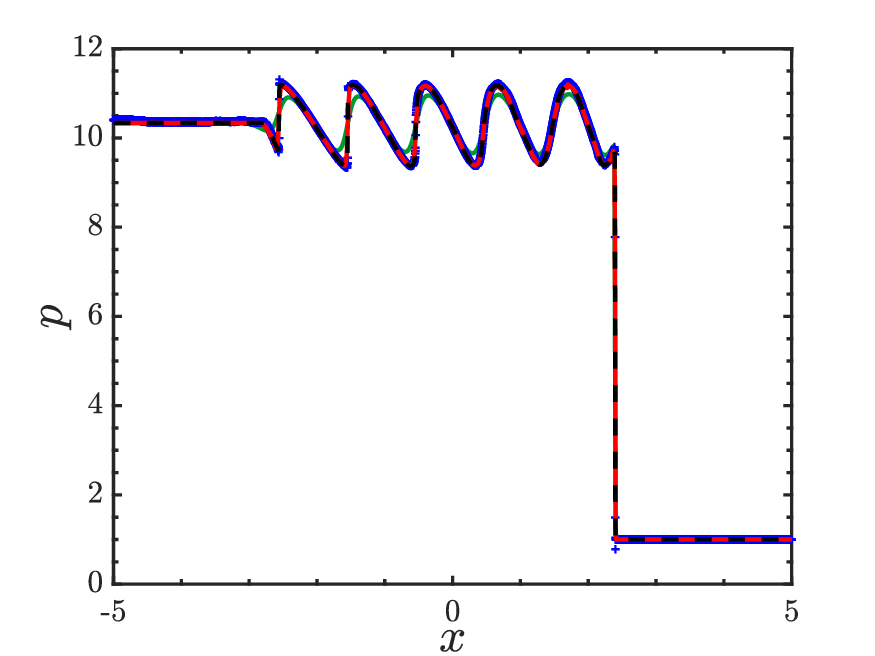}\end{minipage}\quad
\begin{minipage}{0.44\textwidth}\includegraphics[width=\linewidth]{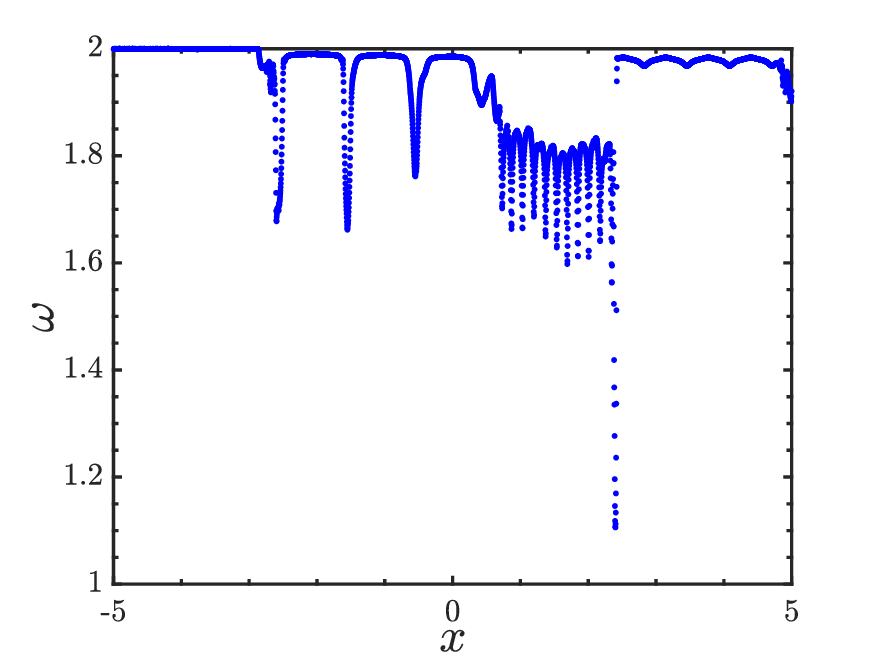}\end{minipage}
\caption{Example 5.7: Shu--Osher problem at $t=1.8$ with $N_x=4000$: density, velocity, pressure, and adaptive relaxation parameter.}\label{fig:shuosher}
\end{figure}
\begin{figure}[htbp]\centering
\begin{subfigure}{0.325\textwidth}\includegraphics[width=\linewidth]{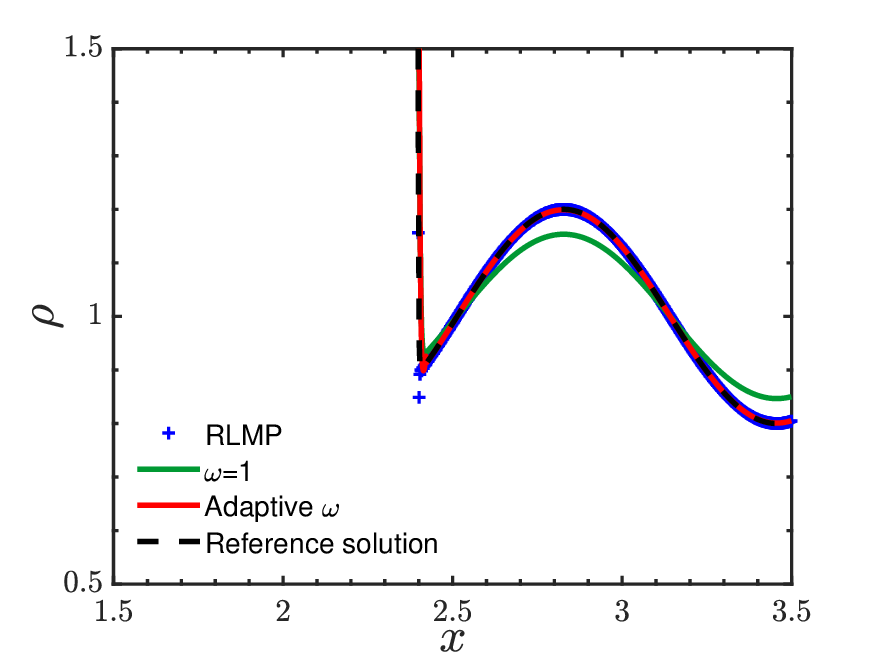}\end{subfigure}
\begin{subfigure}{0.325\textwidth}\includegraphics[width=\linewidth]{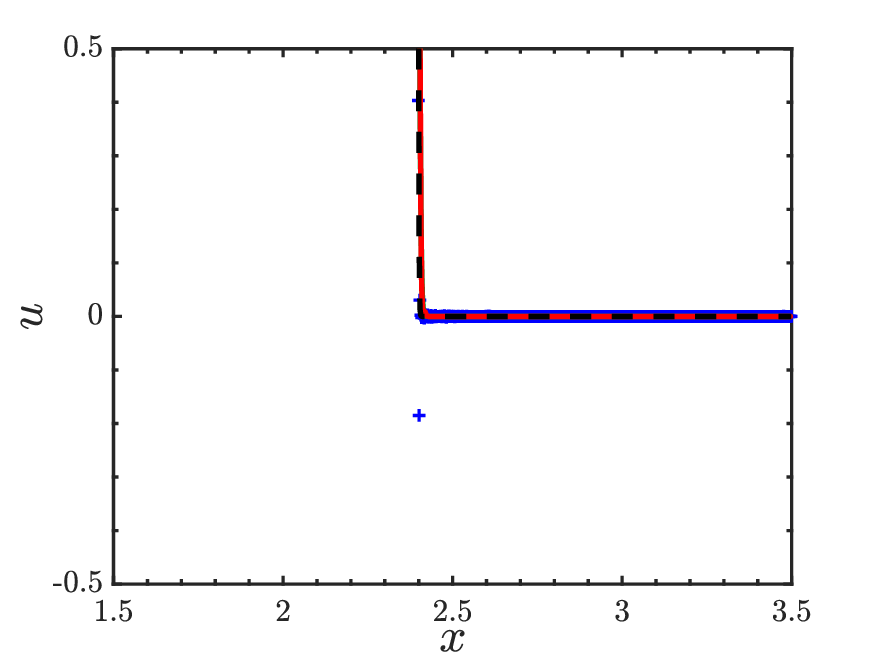}\end{subfigure}
\begin{subfigure}{0.325\textwidth}\includegraphics[width=\linewidth]{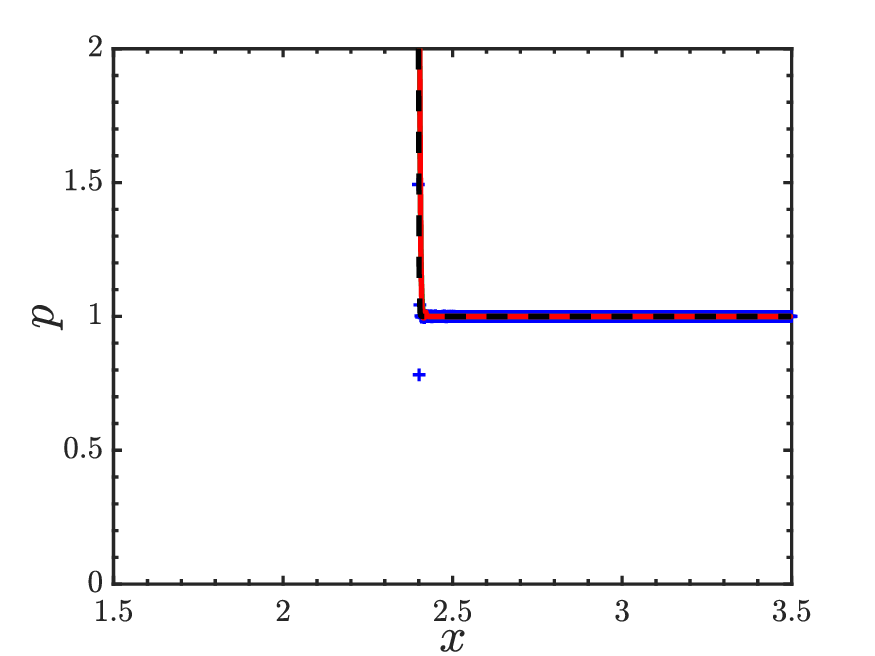}\end{subfigure}
\caption{Example 5.7: Enlarged density, velocity, and pressure profiles in the transmitted-shock region $x\in[1.5,3.5]$.}\label{fig:shu_duibi}
\end{figure}

\noindent\textbf{Example 5.8 (2D Riemann problem).}
We next consider the 2D Riemann problem in \cite{LaxLiu1998} to examine genuinely multidimensional shock interactions. The computational domain is $[0,1]^2$, and the initial states are
\begin{equation}
(\rho,u,v,p)(x,y,0)=\begin{cases}
(1.1,0,0,1.1),&x>0.5,\ y>0.5,\\
(0.5065,0.8939,0,0.35),&x\le0.5,\ y>0.5,\\
(1.1,0.8939,0.8939,1.1),&x\le0.5,\ y\le0.5,\\
(0.5065,0,0.8939,0.35),&x>0.5,\ y\le0.5.
\end{cases}
\end{equation}
We set $N_x=N_y=800$, impose zero-gradient boundary conditions, and use $\mathrm{CFL}=0.9$. Figure~\ref{fig:riemann2d_config4} presents the solution at $t=0.25$. 
It can be seen that the present LBM resolves the straight shocks generated by the initial discontinuities and the curved shocks surrounding the lens-shaped high-pressure region without significant spurious oscillations. Regions of reduced $\omega$ track the shocks and strongest wave interactions, whereas $\omega$ remains close to $2$ in smoother regions. This distribution confirms that the multidimensional stabilization is spatially localized.
\begin{figure}[htbp]\centering
\begin{minipage}{0.45\textwidth}\includegraphics[width=\linewidth]{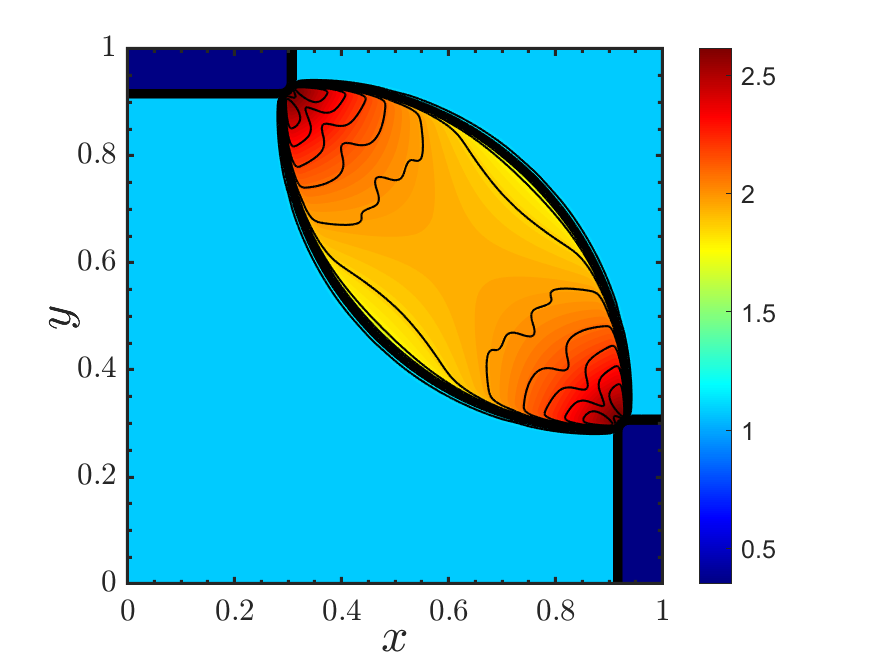}\end{minipage}\quad
\begin{minipage}{0.45\textwidth}\includegraphics[width=\linewidth]{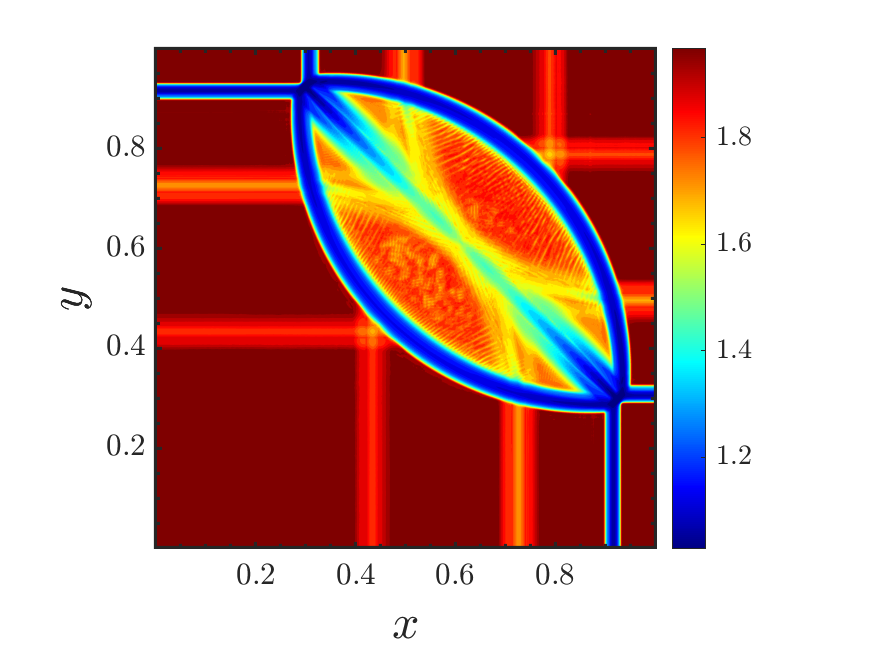}\end{minipage}
\caption{Example 5.8: 2D Riemann problem at $t=0.25$ with $N_x=N_y=800$. Left: pressure field with 29 superimposed density contours at levels $0.52:0.05:1.92$. Right: adaptive relaxation parameter.}
\label{fig:riemann2d_config4}
\end{figure}

\subsection{Shallow-water equations}
We next apply the present LBM to the classical 1D dry-bed and 2D circular dam-break problems for the shallow-water equations \cite{Toro2009}. They examine the treatment of wet--dry fronts, rarefaction waves, and multidimensional wave propagation. In two dimensions, the governing equations are
\[
U_t+F(U)_x+G(U)_y=0,
\qquad
U=(h,hu,hv)^{T},
\]
where
\[
F(U)=
\begin{pmatrix}
	hu\\[1mm]
	hu^2+\dfrac{1}{2}gh^2\\[1mm]
	huv
\end{pmatrix},
\qquad
G(U)=
\begin{pmatrix}
	hv\\[1mm]
	huv\\[1mm]
	hv^2+\dfrac{1}{2}gh^2
\end{pmatrix}.
\]
Here, $h$ is the water depth, and $g=9.81$ is the gravitational acceleration. The 1D equations are obtained by omitting the $y$-flux and the transverse momentum component.  Transmissive boundary conditions are imposed in both examples.

\noindent\textbf{Example 5.9 (1D dam break over a dry bed).}
For the 1D dam break problem over a dry bed, the domain is $x\in[-1,1]$ and the initial data are
\[
(h,u)(x,0)=\begin{cases}(2,0),&x\le0,\\(0,0),&x>0.\end{cases}
\]
The computation is performed to $t=0.15$ with $N_x=600$ and $\mathrm{CFL}=0.9$. Figure~\ref{fig:dry_bed} compares the numerical water depth and velocity with the analytical dry-bed dam-break solution. Both variables agree well with the analytical profiles in the rarefaction region, and the moving wet--dry front is captured without visible nonphysical oscillations. The reduction of $\omega$ is confined mainly to the front and the steep portion of the rarefaction, indicating that the present LBM supplies additional dissipation only where needed.
\begin{figure}[htbp]\centering
\begin{subfigure}{0.325\textwidth}\includegraphics[width=\linewidth]{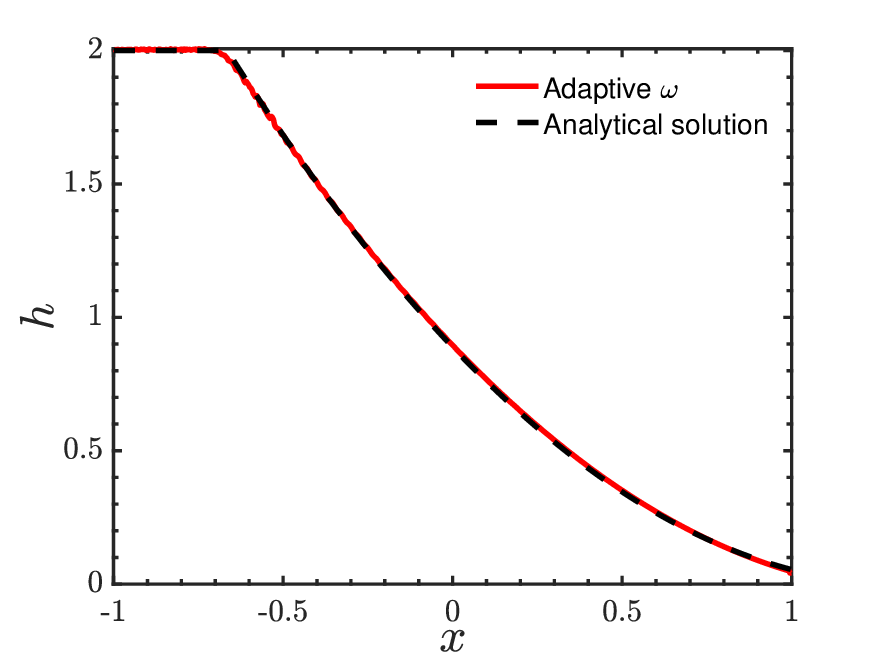}\end{subfigure}
\begin{subfigure}{0.325\textwidth}\includegraphics[width=\linewidth]{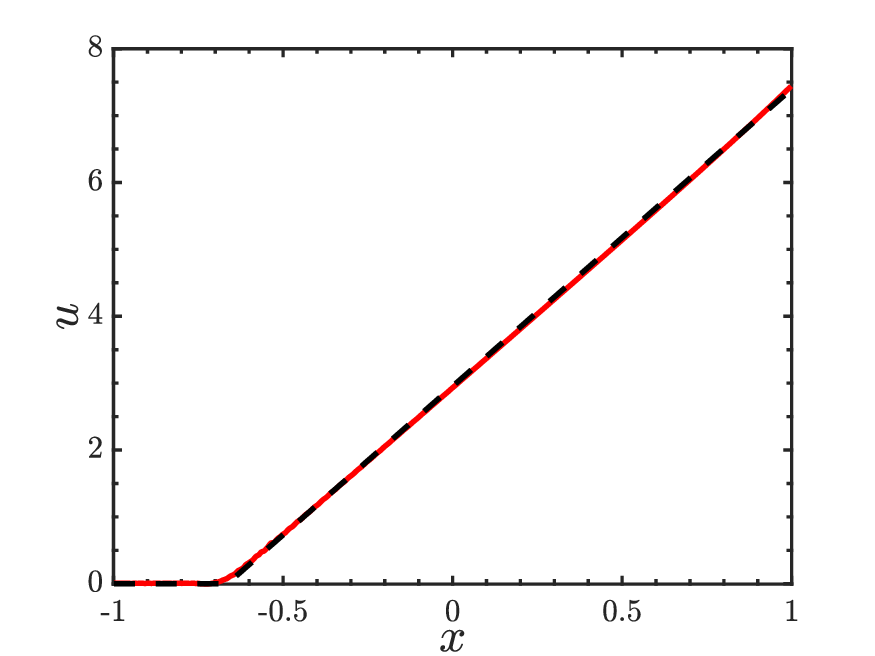}\end{subfigure}
\begin{subfigure}{0.325\textwidth}\includegraphics[width=\linewidth]{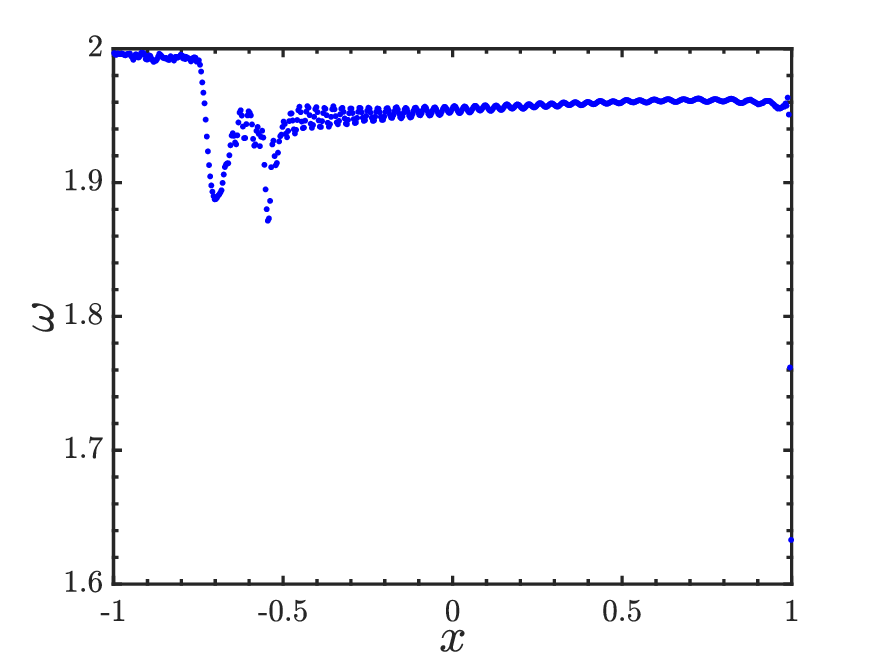}\end{subfigure}
\caption{Example 5.9: 1D dam break over a dry bed at $t=0.15$: water depth $h$, velocity $u$, and adaptive relaxation parameter $\omega$.}\label{fig:dry_bed}
\end{figure}

\noindent\textbf{Example 5.10 (2D circular dam break).}
In this 2D circular dam break problem, the domain is $[-1,1]^2$ and the initial data are
\[
h(x,y,0)=\begin{cases}2,&r\le0.5,\\1,&r>0.5,\end{cases}\qquad r=\sqrt{x^2+y^2},\qquad u(x,y,0)=v(x,y,0)=0.
\]
The computation is performed to $t=0.07$ with $N_x=N_y=400$ and $\mathrm{CFL}=0.4$. A high-resolution radial solution is used as the reference for the centerline comparison along $y=0$. Figure~\ref{fig:circular_dam} shows that the computed water depth $h$ agrees well with the reference solution, resolving both the outward-propagating front and the inner rarefaction structure. The velocity also follows the reference solution through the principal acceleration and deceleration regions. Despite the use of a Cartesian grid, the 2D depth contour remains nearly circular. The region of reduced $\omega$ follows the propagating front and other strong gradients, while $\omega$ remains close to $2$ elsewhere. The present LBM therefore combines good symmetry preservation with spatially localized dissipation in this multidimensional dam-break problem.
\begin{figure}[htbp]\centering
\begin{subfigure}{0.44\textwidth}\includegraphics[width=\linewidth]{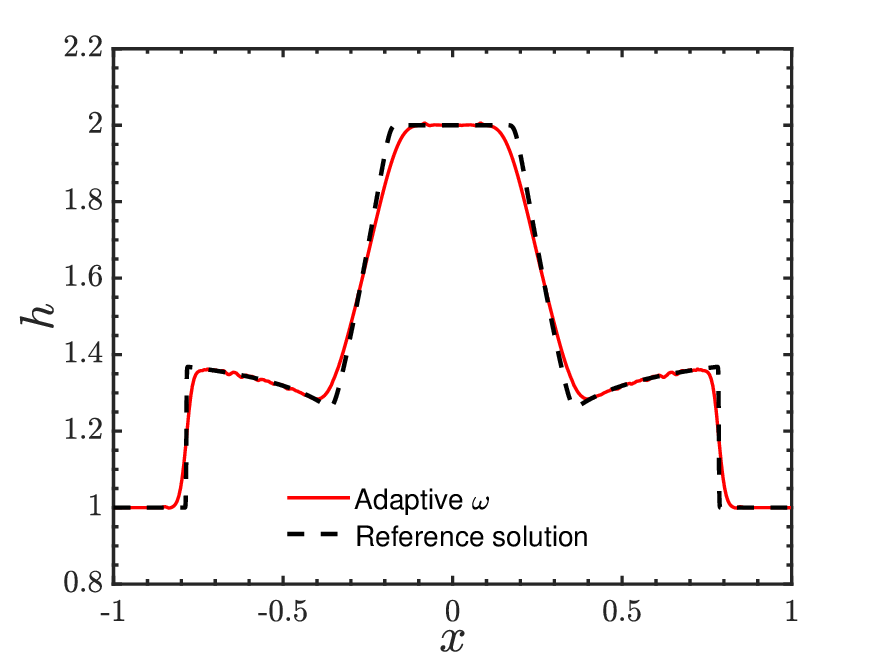}\end{subfigure}\quad
\begin{subfigure}{0.44\textwidth}\includegraphics[width=\linewidth]{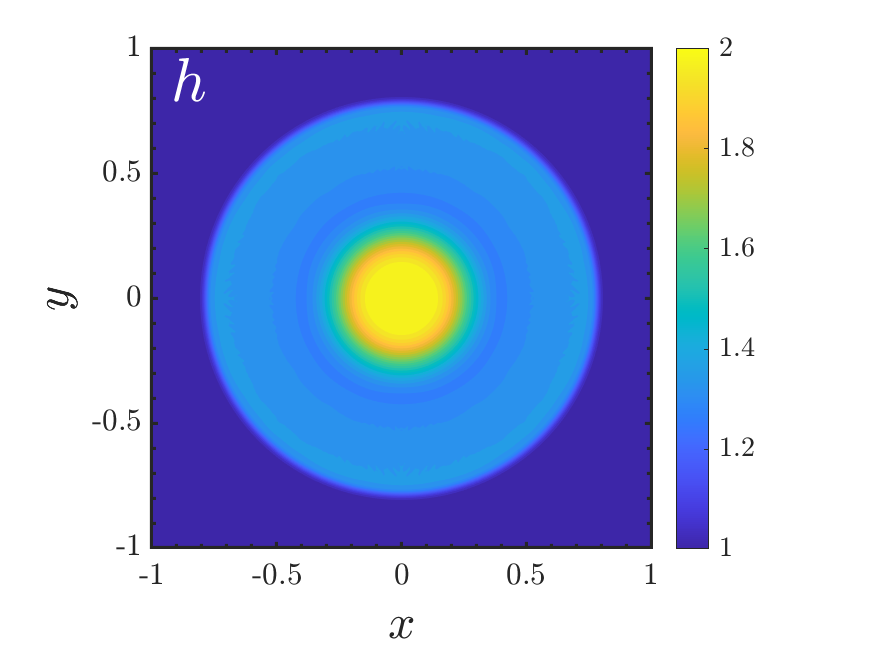}\end{subfigure} \\
\begin{subfigure}{0.44\textwidth}\includegraphics[width=\linewidth]{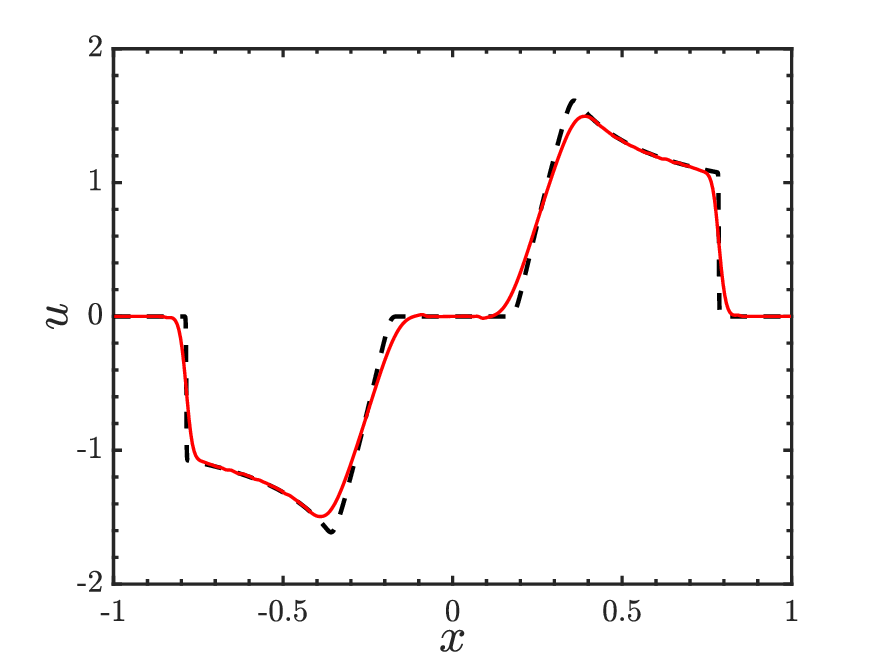}\end{subfigure}\quad
\begin{subfigure}{0.44\textwidth}\includegraphics[width=\linewidth]{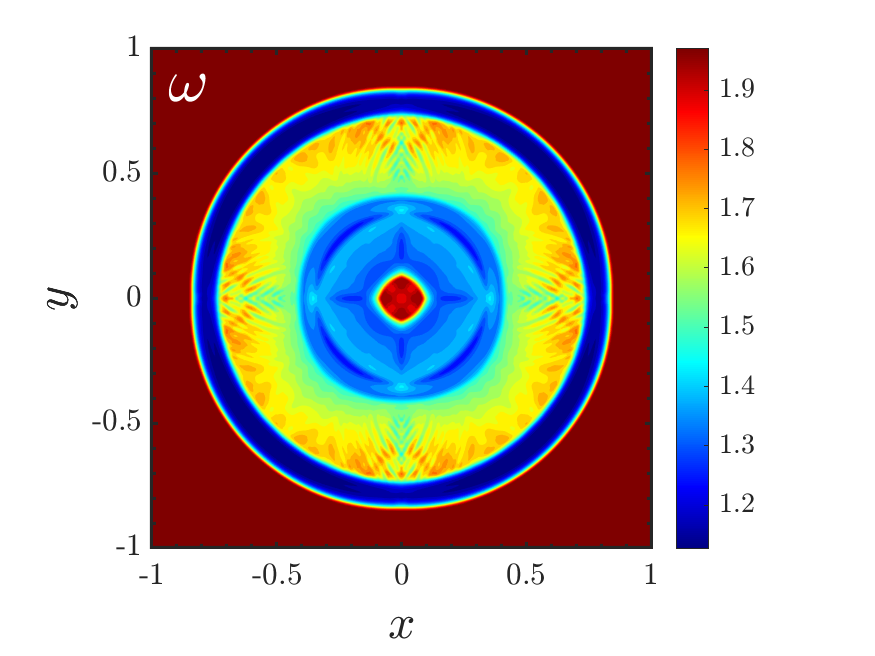}\end{subfigure}
\caption{Example 5.10: 2D circular dam break at $t=0.07$ with $N_x=N_y=400$: centerline depth and velocity compared with a high-resolution radial reference solution, depth contour, and adaptive relaxation parameter.}\label{fig:circular_dam}
\end{figure}

\subsection{Reactive Euler equations}
Finally, we apply the present LBM to the reactive Euler equations to examine its robustness for detonation waves and reactive discontinuities. In two dimensions, the governing equations are \cite{Chorin1977,OranBoris2001}
\[
U_t+F(U)_x+G(U)_y=S(U),\qquad U=(\rho,\rho u,\rho v,\rho E,\rho Y)^T,
\]
with
\[
F(U)=\begin{pmatrix}\rho u\\\rho u^2+p\\\rho uv\\u(\rho E+p)\\\rho uY\end{pmatrix},\qquad
G(U)=\begin{pmatrix}\rho v\\\rho uv\\\rho v^2+p\\v(\rho E+p)\\\rho vY\end{pmatrix},
\]
\[
\rho E=\frac{p}{\gamma-1}+\frac12\rho(u^2+v^2)+\rho qY,
\qquad S(U)=(0,0,0,0,-R)^T,
\]
where $R=K\rho Y\exp(-T_i/\mathcal{T})$ is the reaction rate in the Arrhenius form and $\mathcal{T}=p/\rho$ is the temperature. The 1D equations are obtained by omitting the $y$-flux and the transverse momentum component.

\noindent\textbf{Example 5.11 (1D stable detonation).}
We first simulate the stable detonation problem on $x\in[0,100]$, with $K=145.68913$, $T_i=50$, and $q=50$  \cite{LianXu2000}. The initial data are
\[
(\rho,u,p,Y)(x,0)=\begin{cases}(2.5706,6.86089,80.02055,0),&x<2,\\(1,0,1,1),&x\ge2,\end{cases}
\]
and the terminal time is $t=7.5$.
We use $\delta_x=1/150$, $\mathrm{CFL}=0.9$, and impose transmissive boundary conditions. The reference solution is computed on the same mesh using a fifth-order WENO scheme with global Lax--Friedrichs flux splitting and third-order TVD Runge--Kutta time integration. Figure~\ref{fig:Stable_detonation_1D} shows close agreement between the present LBM and the WENO reference for density, velocity, pressure, and reactant mass fraction. In particular, the detonation front and reaction zone remain sharply resolved. The parameter $\omega$ decreases locally near the front and other steep gradients but returns close to $2$ in smooth regions. 
\begin{figure}[htbp]\centering
\begin{subfigure}{0.325\textwidth}\includegraphics[width=\linewidth]{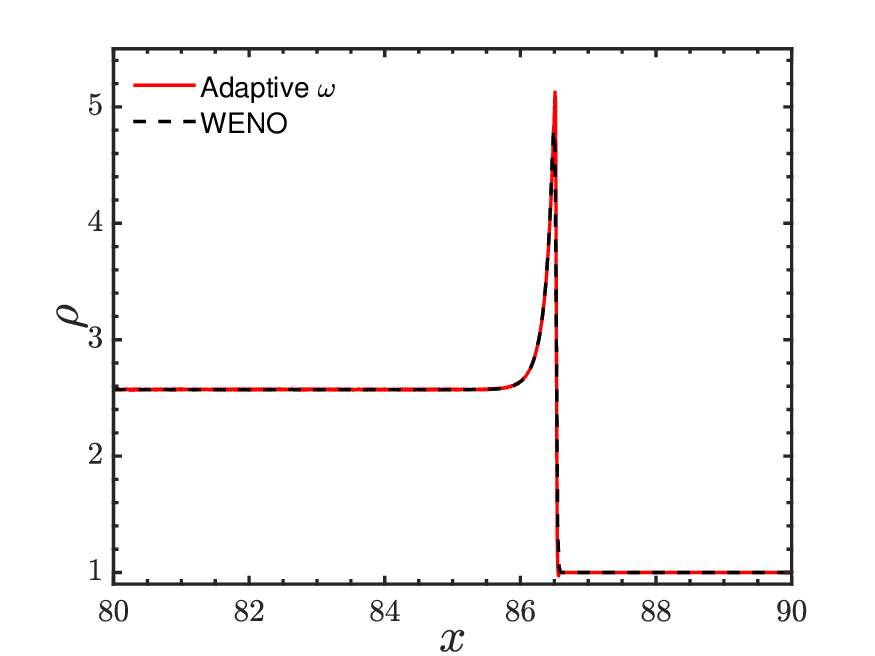}\end{subfigure}
\begin{subfigure}{0.325\textwidth}\includegraphics[width=\linewidth]{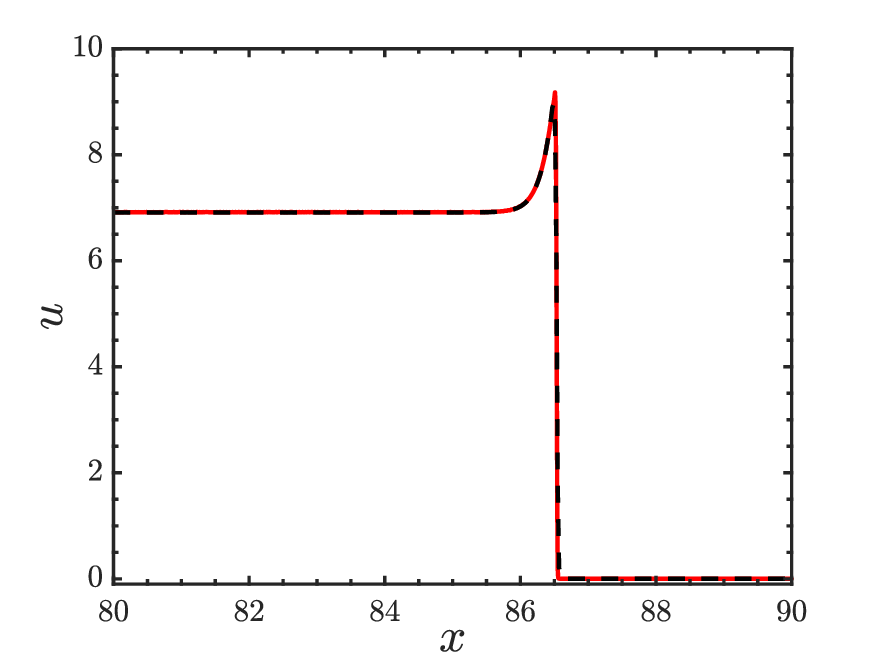}\end{subfigure}
\begin{subfigure}{0.325\textwidth}\includegraphics[width=\linewidth]{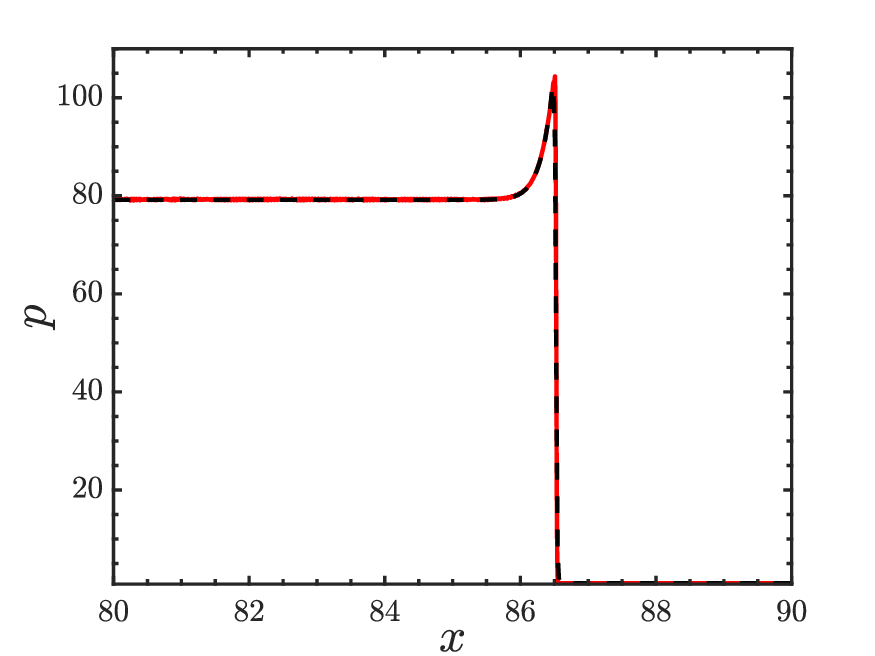}\end{subfigure}\\
\begin{subfigure}{0.325\textwidth}\includegraphics[width=\linewidth]{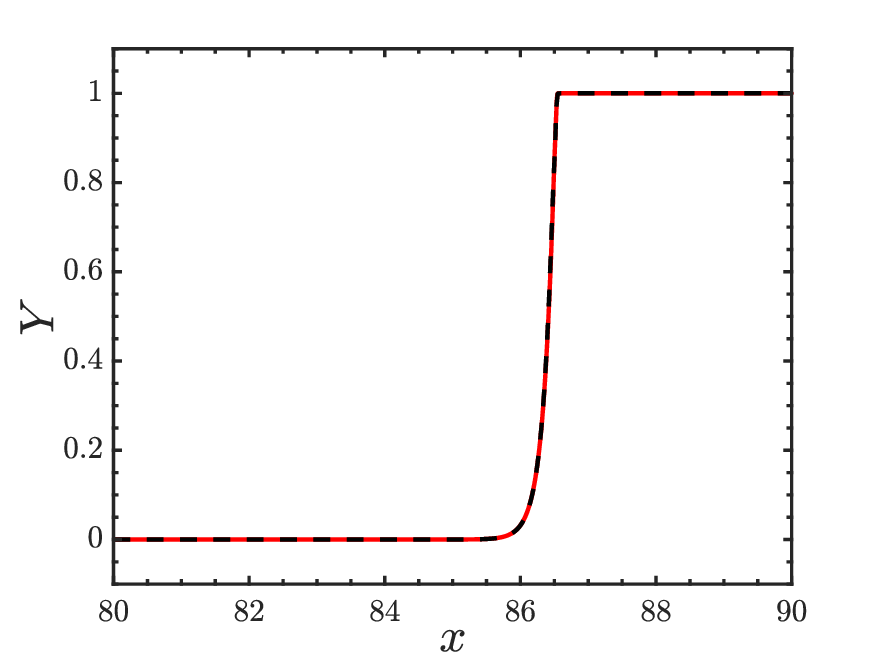}\end{subfigure}
\begin{subfigure}{0.325\textwidth}\includegraphics[width=\linewidth]{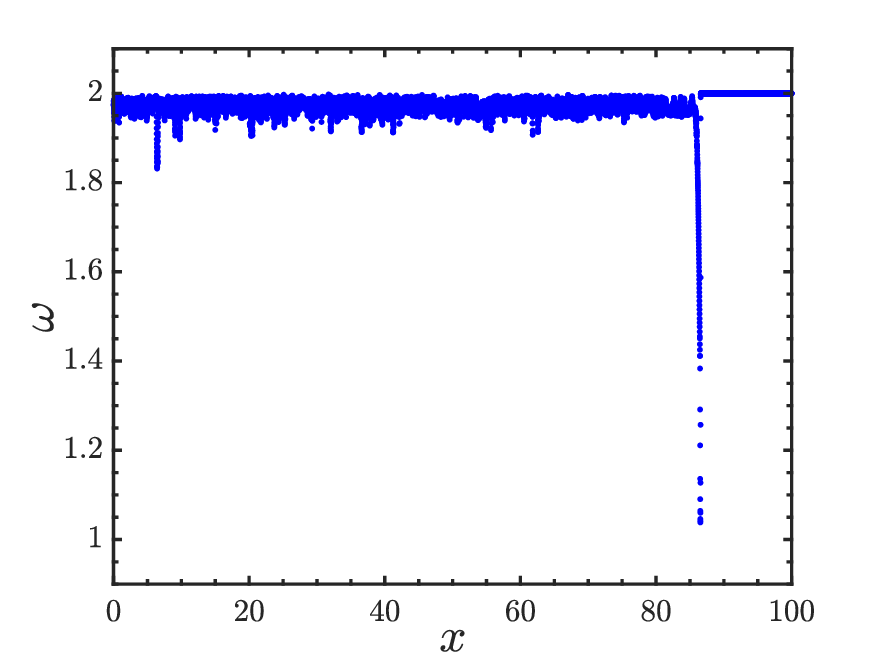}\end{subfigure}
\caption{Example 5.11: Density, velocity, pressure, reactant mass fraction, and adaptive relaxation parameter for the present LBM and WENO solutions of the 1D stable detonation wave at $t=7.5$ with $\delta_x=1/150$.}\label{fig:Stable_detonation_1D}
\end{figure}

\noindent\textbf{Example 5.12 (2D reactive circular discontinuity).}
In this example, the spatial domain is $[-1,1]^2$. We take $\gamma=1.2$, $K=2566.4$, $T_i=50$, and $q=50$, with the following initial data \cite{ZhaoHuang2020,Zhao2022MC}:
\[
(\rho,u,v,p,Y)(x,y,0)=\begin{cases}(1,0,0,80,0.2),&x^2+y^2\le0.36,\\(1,0,0,10,0.8),&\text{otherwise}.
\end{cases}
\]
The computation is carried out to $t=0.04$ on a uniform grid with $N_x=N_y=400$. We take $\mathrm{CFL}=0.9$ and use transmissive boundary conditions on all sides. Figure~\ref{fig:Circular_Reactive_Explosion2D} shows that the expanding reactive front remains nearly circular during the early-time evolution. The pressure field depicts the outward expansion of the initially hot region, while the reactant mass fraction distinguishes the burned and unburned regions without visible grid-scale oscillations. The reduction of $\omega$ is concentrated along the expanding front and other strong gradients. These results indicate that the present LBM resolves multidimensional reactive discontinuities while preserving the localized character of its dissipation mechanism.
\begin{figure}[htbp]\centering
\begin{subfigure}{0.325\textwidth}\includegraphics[width=\linewidth]{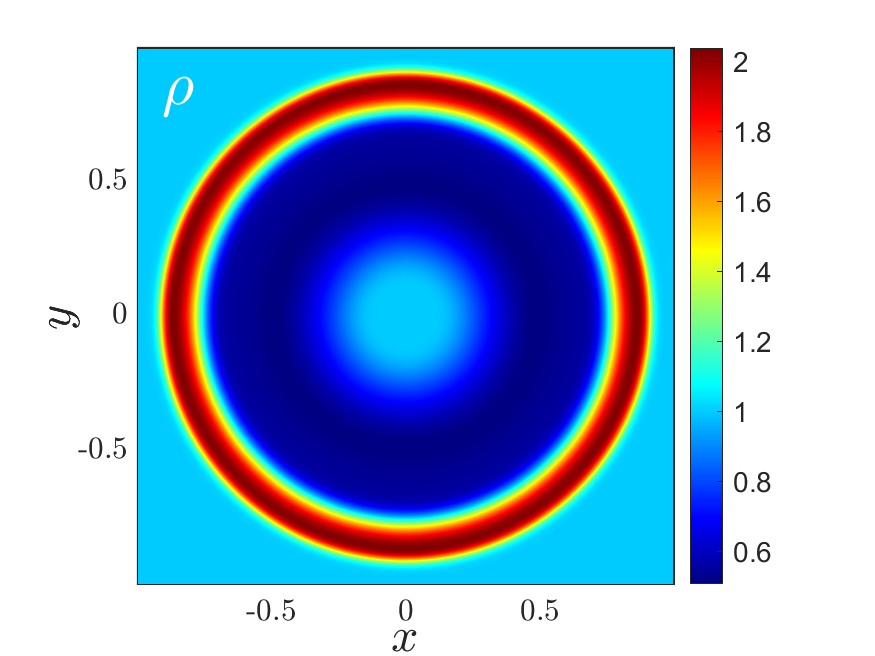}\end{subfigure}
\begin{subfigure}{0.325\textwidth}\includegraphics[width=\linewidth]{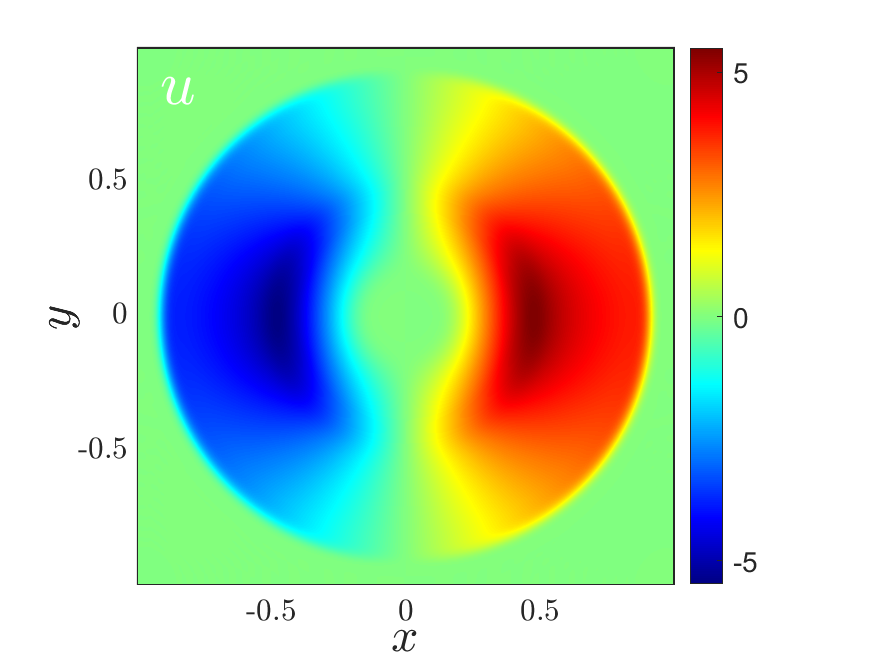}\end{subfigure}
\begin{subfigure}{0.325\textwidth}\includegraphics[width=\linewidth]{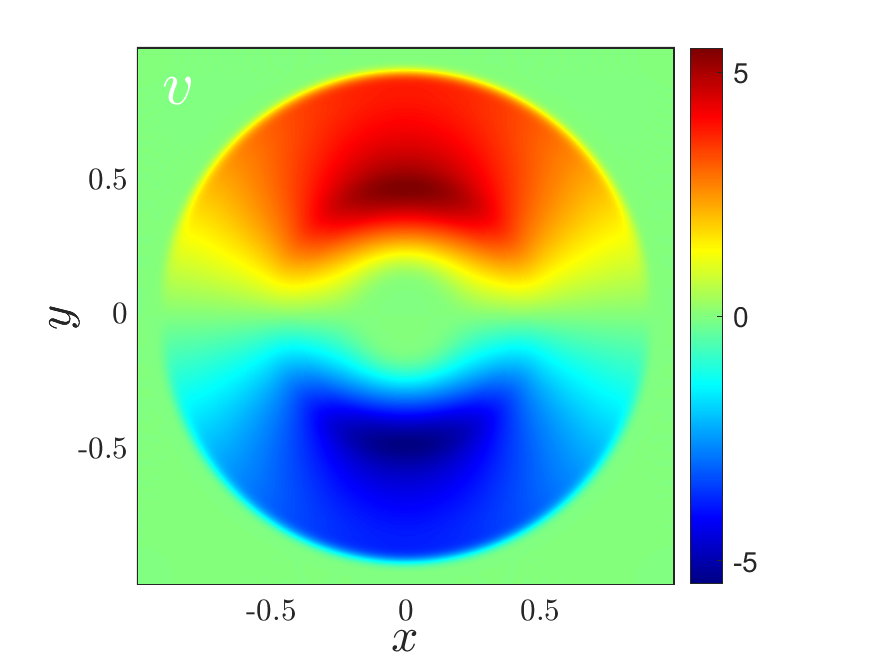}\end{subfigure}\\
\begin{subfigure}{0.325\textwidth}\includegraphics[width=\linewidth]{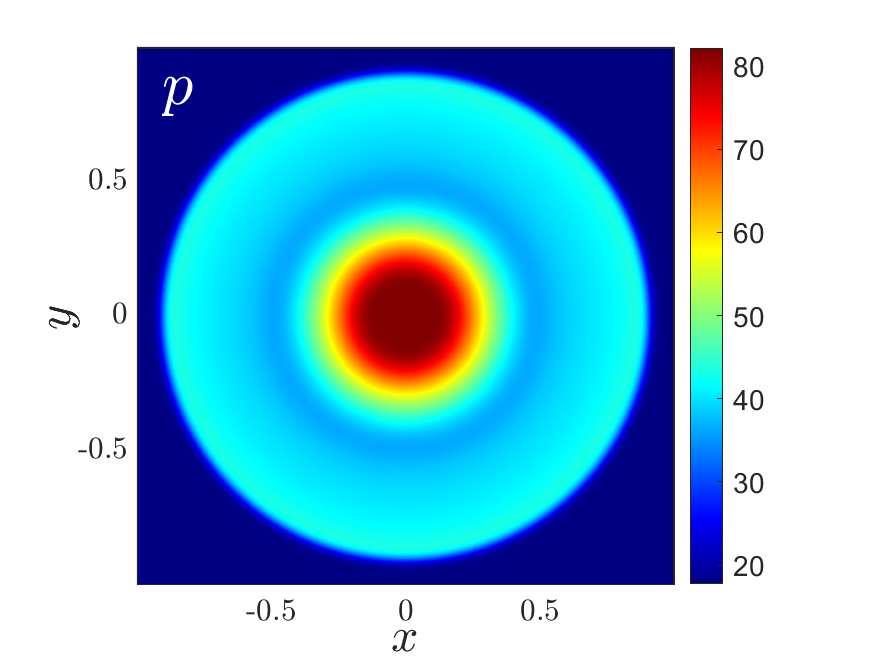}\end{subfigure}
\begin{subfigure}{0.325\textwidth}\includegraphics[width=\linewidth]{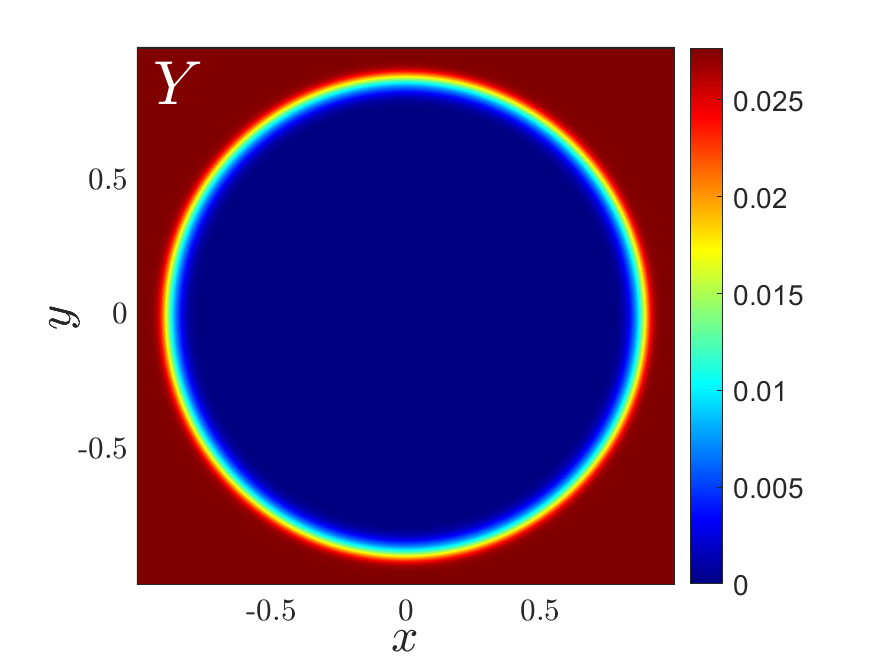}\end{subfigure}
\begin{subfigure}{0.325\textwidth}\includegraphics[width=\linewidth]{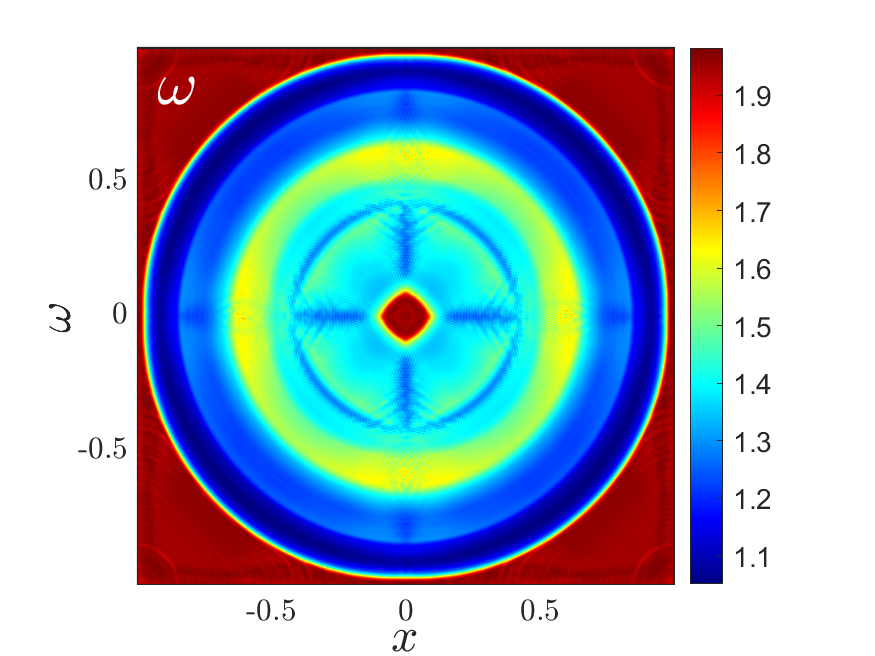}\end{subfigure}
\caption{Example 5.12: Density, velocity components, pressure, reactant mass fraction, and adaptive relaxation parameter for the 2D reactive circular discontinuity at $t=0.04$.}\label{fig:Circular_Reactive_Explosion2D}
\end{figure}

\noindent\textbf{Example 5.13 (2D stable cellular detonation).} 
Stable cellular detonations governed by one-step kinetics have been used extensively to demonstrate multidimensional reactive-flow solvers \cite{PapalexandrisLeonardDimotakis2002,GaoDonLi2012,LiGaoDon2018}. Here we consider the  cellular detonation problem in \cite{LiGaoDon2018} on the domain $[0,40]\times[-15,15]$. The parameters are $\gamma=1.2$, $q=2$, $T_i=20$, and $K=1.13436364\times10^6$; for these parameters, the underlying 1D detonation is linearly stable \cite{GaoDonLi2012,LiGaoDon2018}. The problem is formulated in a shock-attached frame moving at the prescribed detonation speed $D$. The initial front is located at $x_d=30$. Ahead of the front ($x>x_d$), the uniform unreacted state is
$(\rho,u,v,p,Y)=(1,-D,0,1,1)$; behind it ($x<x_d$), the initial state is the corresponding steady 1D Zeldovich--von Neumann--D\"oring (ZND) profile for the same kinetic parameters and detonation speed. Here $D=\sqrt{f}\,D_{\mathrm{CJ}}$, where $f=1.1$ is the overdrive factor and $D_{\mathrm{CJ}}$ is the Chapman--Jouguet detonation speed,
$$
D_{\mathrm{CJ}}=
\left[
\gamma+(\gamma^2-1)q+
\sqrt{(\gamma^2-1)q
\left(2\gamma+(\gamma^2-1)q\right)}
\right]^{1/2}.
$$
To initiate the transverse dynamics, the velocity perturbation
\[
v(x,y,0)=0.1\sin\left(2\pi k y/30\right),\qquad k=6,
\]
is imposed only in the strip $x_d-1\leq x\leq x_d$; elsewhere, the transverse velocity is initially zero \cite{LiGaoDon2018}.

Reflective boundary conditions are imposed at $y=\pm15$. At the right boundary, the incoming state is fixed at the uniform unreacted state, whereas outgoing disturbances are treated transmissively. The left boundary uses a zero-gradient outflow condition, supplemented by a smooth sponge layer over $0\leq x\leq6$ to reduce spurious reflections. The computation uses a $1200\times900$ uniform grid with $\delta_x=\delta_y=1/30$ and $\mathrm{CFL}=0.3$.

Figure~\ref{fig:stable cellular detonation 1} shows a fully developed cellular detonation at $t=268$. The corrugated leading front and intersecting transverse waves are clearly resolved in the flow fields. The adaptive relaxation parameter decreases near the leading shock, reaction front, transverse waves, and their interaction regions, while remaining close to $2$ in smooth regions. Figure~\ref{fig:stable cellular detonation 2} further shows that the localized pressure maxima and the corrugated reaction front persist and evolve regularly at late times. The repeated diamond-shaped trajectories in Figure~\ref{fig:stable cellular detonation 3} reflect the propagation, reflection, and interaction of transverse waves, indicating a long-time quasi-periodic cellular regime. These structures are qualitatively consistent with results of the high-order hybrid scheme in \cite{LiGaoDon2018}. These results demonstrate that the proposed method can robustly capture long-time multidimensional shock--reaction interactions.
\begin{figure}[htbp]\centering
	\begin{subfigure}{0.325\textwidth}\includegraphics[width=\linewidth]{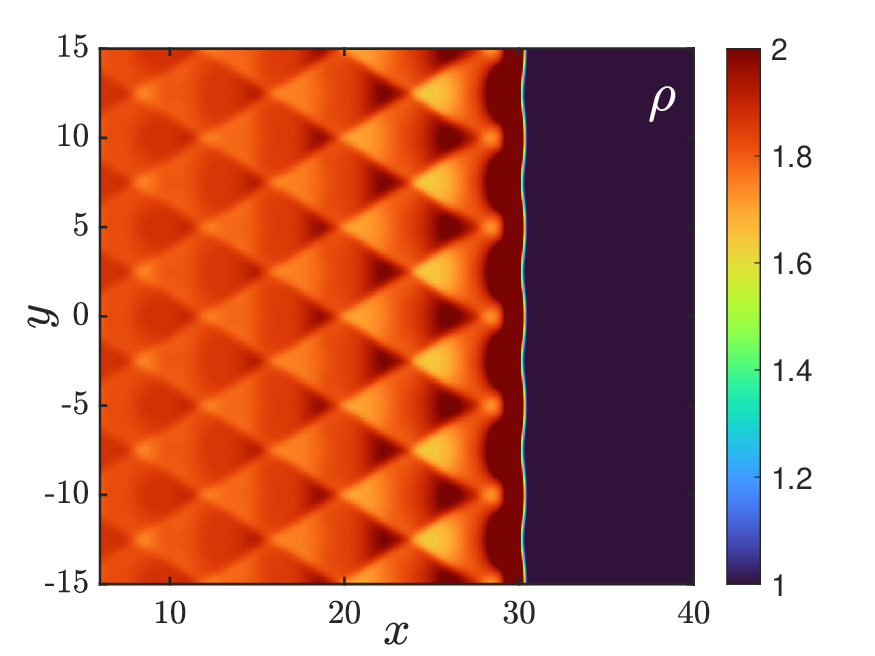}\end{subfigure}
	\begin{subfigure}{0.325\textwidth}\includegraphics[width=\linewidth]{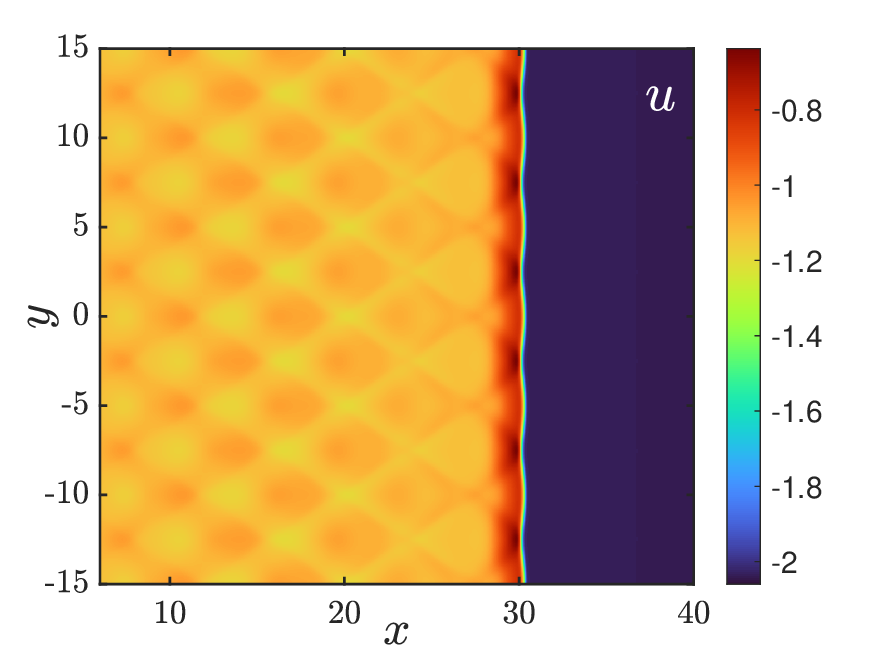}\end{subfigure}
	\begin{subfigure}{0.325\textwidth}\includegraphics[width=\linewidth]{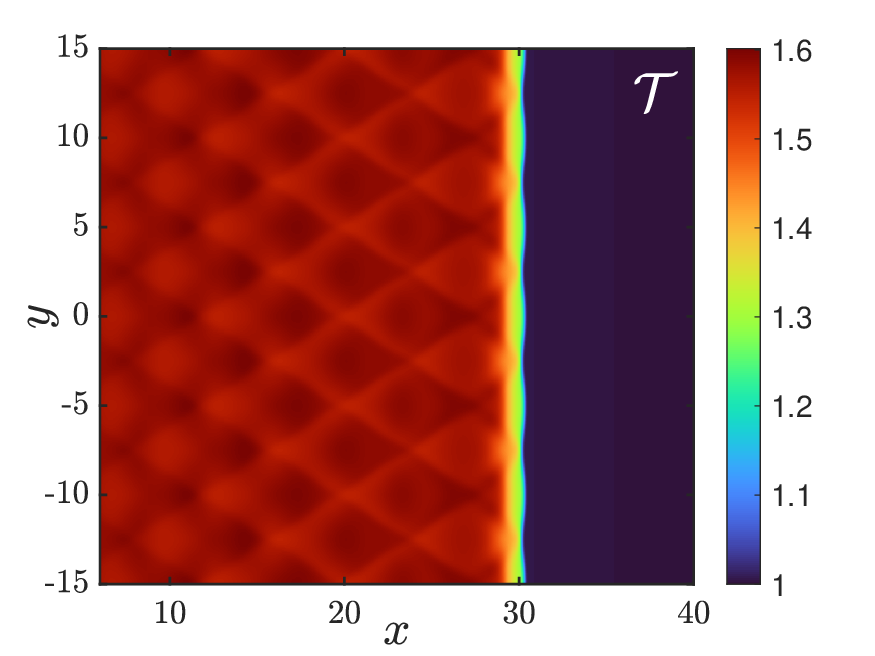}\end{subfigure}\\
	\begin{subfigure}{0.325\textwidth}\includegraphics[width=\linewidth]{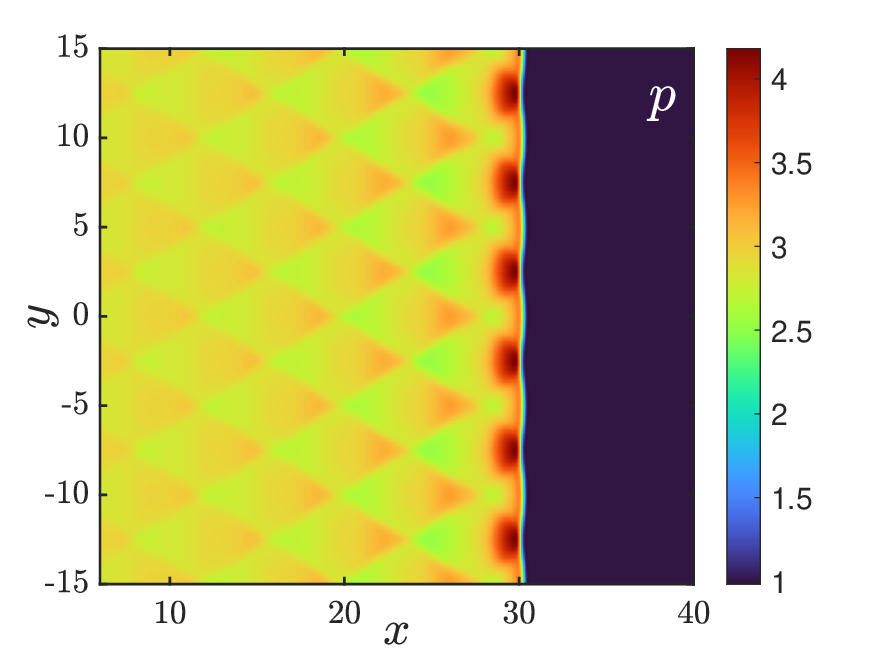}\end{subfigure}
	\begin{subfigure}{0.325\textwidth}\includegraphics[width=\linewidth]{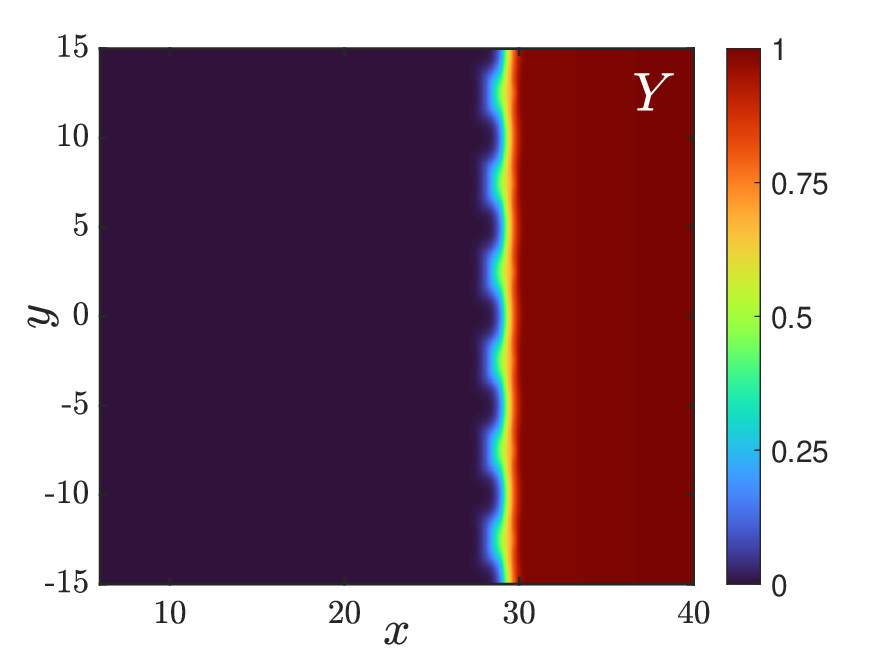}\end{subfigure}
	\begin{subfigure}{0.325\textwidth}\includegraphics[width=\linewidth]{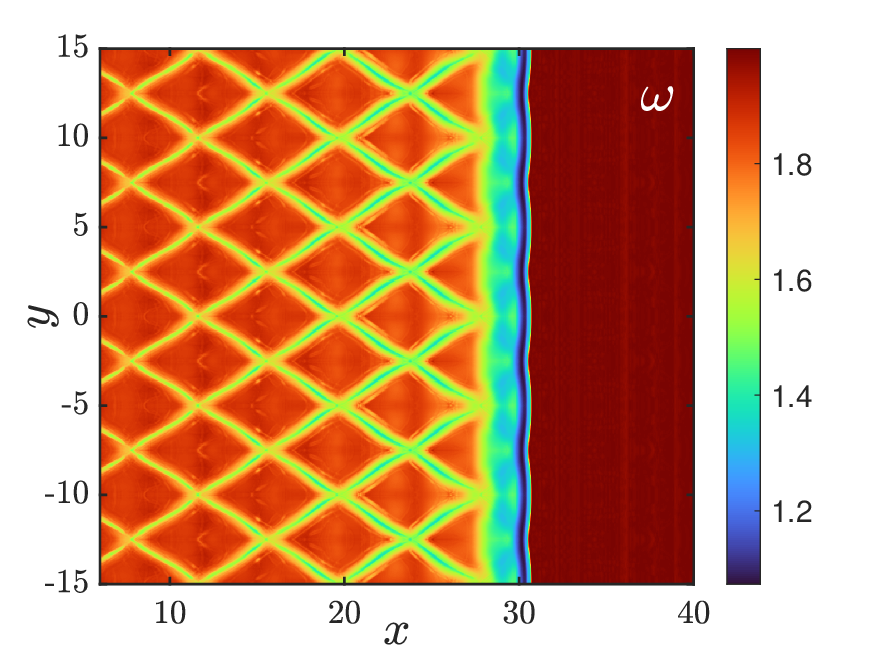}\end{subfigure}
\caption{Example 5.13: Density, streamwise velocity, temperature, pressure, reactant mass fraction, and adaptive relaxation parameter at $t=268$.}\label{fig:stable cellular detonation 1}
\end{figure}

\begin{figure}[htbp]\centering
	\begin{subfigure}{1.0\textwidth}\includegraphics[width=\linewidth]{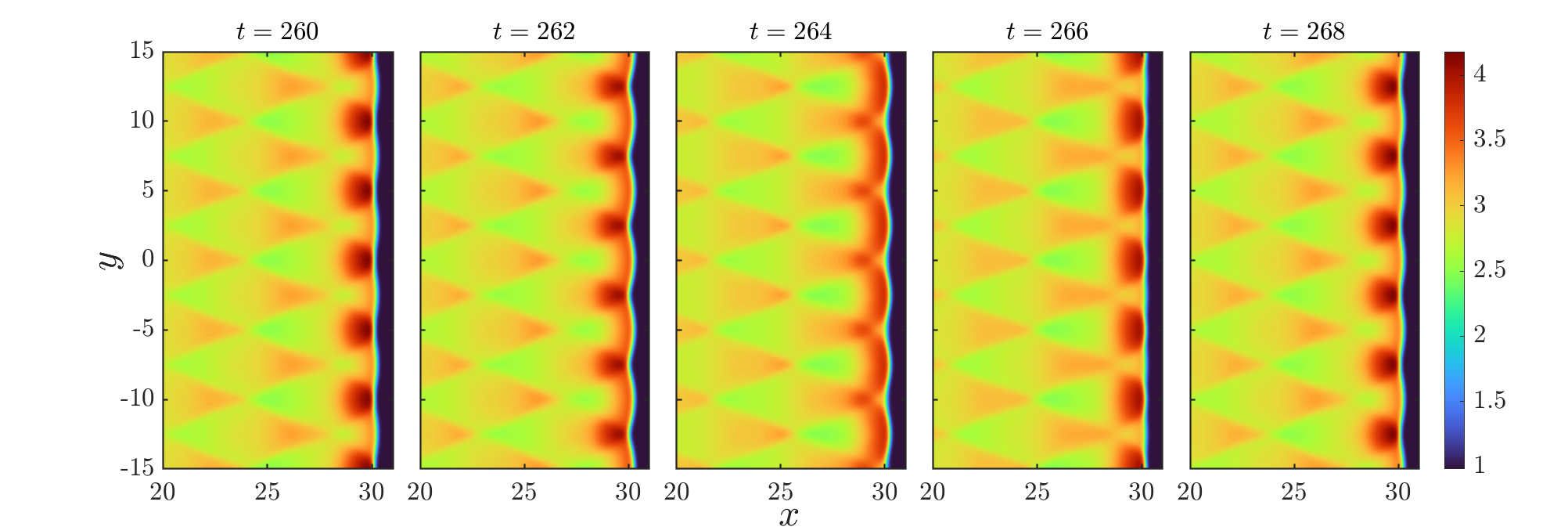}\end{subfigure}\\
	\begin{subfigure}{1.0\textwidth}\includegraphics[width=\linewidth]{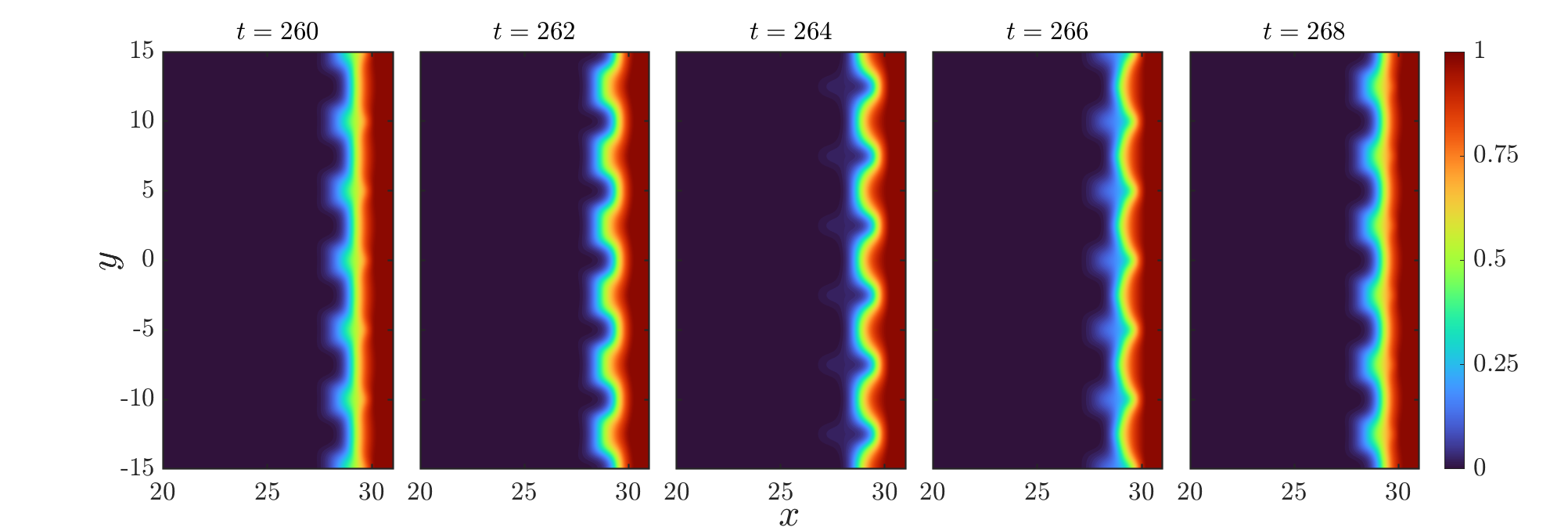}\end{subfigure}
\caption{Example 5.13: Evolution of pressure (top) and reactant mass fraction (bottom)  in the near-front region $20<x<31$ at $t = 260, 262, 264, 266$, and $268$.}\label{fig:stable cellular detonation 2}
\end{figure}

\begin{figure}[htbp]\centering
	\includegraphics[width=1.0\textwidth]{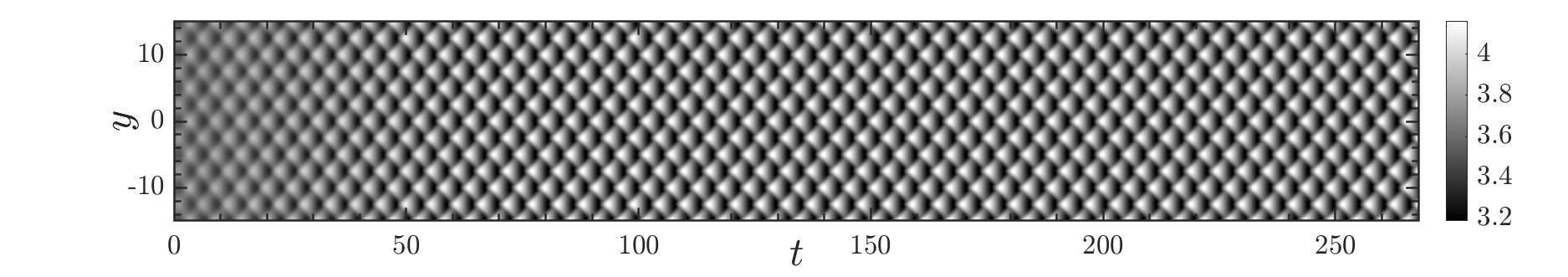}
\caption{Example 5.13: Space-time evolution of the front-region peak pressure $p_{peak}(y,t)=max_{x \in [20,31]} p(x,y,t)$}\label{fig:stable cellular detonation 3}
\end{figure}

\section{Conclusions and remarks}
In this work, we have developed an efficient LBM with an adaptive relaxation parameter and a Lax--Friedrichs-type equilibrium for hyperbolic systems. The equilibrium incorporates dissipation through characteristic-speed bounds, while a smoothness indicator based on characteristic projections adjusts the relaxation parameter locally. This combination directly addresses the compromise associated with a fixed relaxation parameter: the method approaches the second-order, low-dissipation limit in smooth regions and increases dissipation only near detected nonsmooth structures. The stabilization acts through the collision step and therefore preserves the standard collide-and-stream update without requiring nonlinear reconstruction or a Riemann-solver-based numerical flux.

Numerical analysis is conducted to clarify the accuracy and stability of our method. Maxwell iteration is employed to show its second-order accuracy in both space and time. For linear hyperbolic systems with periodic boundary conditions, the equilibrium Jacobians yield a matrix-valued weight under which the method is $L^2$-stable subject to the standard CFL condition. 
The proposed method is demonstrated through a variety of problems. The smooth linear-advection and Euler tests recover the expected asymptotic order. The discontinuous advection, shock-tube, shallow-water, 2D Riemann, and reactive-flow tests show that the relaxation parameter decreases near discontinuities while remaining close to $2$ in smooth regions. In the comparisons, the present LBM reduces the excessive dissipation of fixed $\omega=1$ and provides resolution and robustness comparable to the limiter-based vectorial LBM, with a lower measured cost in the Sod shock-tube test. In particular, the large-scale 2D cellular-detonation simulation resolves the corrugated leading front, transverse-wave interactions, and a persistent quasi-periodic cellular pattern over long times. These results support direct collision-based dissipation control as a simple and efficient stabilization mechanism for hyperbolic systems.

The rigorous stability analysis is presently restricted to linear hyperbolic systems with periodic boundary conditions. Extending the analysis to nonlinear systems and combining the method with invariant-domain or positivity-preserving mechanisms remain important topics for future work. Further extensions to large-scale 3D detonation waves and parallel multidimensional computations are also of interest.

%
%

\section*{Acknowledgments}
This work was supported by the Beijing Natural Science Foundation (No. JR25003) and the National Natural Science Foundation of China (No. 12301520).

\bibliographystyle{siamplain}
\bibliography{StabilityVLBMhyperbolic_gaiban}
\end{document}